\documentclass{amsart}
\usepackage{amsfonts,dsfont}
\usepackage{amsmath}
\usepackage{amssymb}
\usepackage{fullpage}
\usepackage{color}
\usepackage{nccmath} 
\usepackage[shortlabels]{enumitem} 

\newtheorem{theorem}{Theorem}
\newtheorem{corollary}[theorem]{Corollary}
\newtheorem{lemma}[theorem]{Lemma}
\newtheorem{proposition}[theorem]{Proposition}

\newtheorem{remark}[theorem]{Remark}

\newtheorem{conjecture}[theorem]{Conjecture}

\theoremstyle{definition}

\numberwithin{equation}{section}
\numberwithin{theorem}{section}

\newcommand{\pma}{\begin{pmatrix}}
\newcommand{\epma}{\end{pmatrix}}

\newcommand{\enum}{\begin{enumerate}[{\rm (1)},leftmargin=*,itemsep=1ex]}
\newcommand{\eenum}{\end{enumerate}}

\DeclareMathOperator{\I}{Id}
\DeclareMathOperator{\V}{var}

\DeclareMathOperator{\spn}{span}

\DeclareMathOperator{\ad}{ad}

\DeclareMathOperator{\Der}{Der}

\DeclareMathOperator{\E}{End}

\begin{document}

\title{Differential codimensions and exponential growth}

\author[C. Rizzo]{Carla Rizzo}
\address{CMUC, Departamento de Matem\'atica, Universidade de Coimbra, 3004-501 Coimbra, Portugal}
\email{carlarizzo@mat.uc.pt}

\keywords{polynomial identity, differential identity, variety of algebras, codimension growth, Pi-exponent}

\subjclass[2010]{Primary 16R10, 16R50; Secondary 16W25, 16P90}

\thanks{This work was supported by the Centre for Mathematics of the University of Coimbra - UIDB/00324/2020, funded by the Portuguese Government through FCT/MCTES}

\begin{abstract}
Let $A$ be a finite dimensional associative algebra with derivations over a field of characteristic zero, i.e., an algebra  whose structure is enriched by the action of a Lie algebra $L$ by derivations, and let $c_n^L(A),$ $n\geq 1,$ be its differential codimension sequence.
Such sequence is exponentially bounded
and  $\exp^L(A) = \lim_{n\to \infty}\sqrt[n]{c_n^L(A)}$ is an integer that can be computed, called differential PI-exponent of $A$.

In this paper we prove that for any Lie algebra $L$, $\exp^L(A)$ coincides with $\exp(A)$, the ordinary PI-exponent of $A$. 
Furthermore, in case $L$ is a solvable Lie algebra, we apply such result to classify varieties of $L$-algebras of almost polynomial growth, i.e., varieties of
exponential growth such that any proper subvariety has polynomial
growth.
\end{abstract}

\maketitle

\section{Introduction}

Let $A$ be an associative algebra over a field $F$ of characteristic zero and $\I(A)$ its $T$-ideal of polynomial identities.
One of the most interesting and challenging problems in combinatorial theory of polynomials identities is that of finding numerical invariants allowing to give a quantitative description of $\I(A)$. In this setting a very useful and important invariant is the sequence of codimensions $c_n(A)$, $n\geq 1$, of $A$, introduced by Regev in 1972.  More precisely, if $P_n$ is the vector space of multilinear polynomials in the non-commutative $n$ variables, $c_n(A) = \dim P_n/(P_n \cap \I(A))$ is called the $n$-th codimension of $A$. When the base field is of characteristic zero $\I(A)$ is determined by the multilinear
polynomials it contains, then the codimension sequence gives in some sense a quantitative measure of the identities satisfied by $A$. Regardless of its
importance, the exact computation of the codimensions of an algebra is an hard task
and it has been done for very few algebras. That is why one is led to study the asymptotic
behaviour of the codimensions. In this sense, Regev in \cite{Regev1972} showed that if $A$ satisfies a non-trivial
polynomial identity (PI-algebra for short), then the codimension sequence is exponentially bounded. 
Later Kemer \cite{Kemer1979} showed that such codimensions are either polynomially bounded or grow exponentially. Moreover, in \cite{GiambrunoZaicev1998} and \cite{GiambrunoZaicev1999} Giambruno and Zaicev  proved the Amitsur's conjecture for PI-algebras, i.e., they showed that for any PI-algebra $A$ over a field of characteristic zero the sequence
$(c_n(A))^{1/n}$ converges, and its limit is always an integer, called the exponent of $A$ and denoted by $\exp(A)$. Since then, extensive research on the
exponent of PI-algebras has been conducted.

Here we are interested in the growth of the differential identities of algebras, i.e., polynomial identities of algebras with an action of a Lie algebra by derivations. Recall that if $L$ a Lie $F$-algebra acting on $A$ by derivations, then such  action can be naturally extended to the action of the universal enveloping algebra $U(L)$ of $L$ and in this case $A$ is called algebra with derivations or $L$-algebra. Then one can define in a natural way the differential identities of $A$, i.e., polynomials in the variables $x^d$, $d\in U(L)$, vanishing on $A$. Such identities were introduced by Kharchenko in \cite{Kharchenko1978} (see also \cite{Kharchenko1979}) and in later years, after the paper \cite{GordienkoKochetov2014} of Gordienko and Kochetov  the interest on them grew.

Similarly to the ordinary case, one can attach to an $L$-algebra $A$ the differential codimension sequence $c_n^L(A)$, $n\geq 1$. In \cite{Gordienko2013JA} Gordienko showed that in case $A$ is finite dimensional $L$-algebra, $c_n^L(A)$ is exponentially bounded and he captured this exponential rate of growth answering positively to the Amitsur’s conjecture for this kind of algebras. More precisely, he proved that the limit $\lim_{n\to \infty} \sqrt[n]{c_n^L(A)} =
\exp^L(A)$ exists and is a non-negative integer called differential exponent, or $L$-exponent, of $A$ and he gave an explicit way to compute it.  As a consequence of \cite{Gordienko2013JA}, it turns out that the differential codimensions of a finite dimensional algebra are either polynomially bounded or grow exponentially (no intermediate growth is allowed).

The theory of differential identities is a natural generalization of the theory of ordinary polynomial identities arising when the Lie algebra $L$ acts trivially on $A$ and, as consequence,  $U(L)$ coincides with $F$. So,
at this point a question arise naturally: can we compare the differential exponent and the ordinary one of a given $L$-algebra?

Since $c_n(A)\leq c_n^L(A)$ for all $n \geq 1$,  clearly we have that $\exp(A)\leq \exp^L(A)$ and in \cite{GordienkoKochetov2014} Gordienko and Kochetov conjectured the following.
\begin{conjecture}
	If $A$ is a  finite dimensional $L$-algebra, then
	$$
	\exp(A)=\exp^L(A).
	$$
\end{conjecture}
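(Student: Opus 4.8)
The plan is to prove the conjecture by showing that Gordienko's explicit formula for $\exp^L(A)$ and the Giambruno--Zaicev formula for $\exp(A)$ return the same integer; since $\exp(A)\le\exp^L(A)$ is already known, it suffices to establish the reverse inequality, and in fact the two formulas will be seen to coincide term by term. Both quantities are computed from a Wedderburn--Malcev type decomposition: write $A=A_{ss}\oplus J$, where $J=J(A)$ is the Jacobson radical and $A_{ss}=A_1\oplus\cdots\oplus A_q$ is a maximal semisimple subalgebra. The ordinary exponent equals the maximum of $\dim(A_{i_1}\oplus\cdots\oplus A_{i_r})$ taken over all choices of distinct simple components satisfying the connectedness condition $A_{i_1}JA_{i_2}J\cdots JA_{i_r}\ne 0$, while Gordienko's theorem gives $\exp^L(A)$ by the same recipe with the $A_i$ replaced by the $L$-simple components of an $L$-invariant semisimple part.

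First I would set up the decomposition so that it is compatible with the $L$-action. In characteristic zero the radical $J$ is invariant under every derivation, hence it is an $L$-submodule of $A$; moreover one can choose the semisimple complement $A_{ss}$ to be an $L$-subalgebra (an $L$-invariant Wedderburn--Malcev complement exists by the separability and cohomology-vanishing arguments underlying the Gordienko--Kochetov theory). Thus $A_{ss}=B_1\oplus\cdots\oplus B_s$ decomposes into $L$-simple components, and $\exp^L(A)$ is the maximum of $\dim(B_{j_1}\oplus\cdots\oplus B_{j_t})$ over $L$-connected sequences $B_{j_1}JB_{j_2}J\cdots JB_{j_t}\ne 0$.

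The crux of the argument --- and the feature that separates derivation actions from arbitrary Hopf-algebra actions --- is that $L$ acts by derivations, so it fixes every central idempotent: if $e=e^2$ is central in $A_{ss}$ and $d\in L$, then, using that $d(e)\in A_{ss}$ commutes with $e$, we get $d(e)=d(e)e+ed(e)=2ed(e)$, and multiplying by $e$ forces $ed(e)=0$, whence $d(e)=0$. Consequently the block idempotents $e_1,\dots,e_q$ of $A_{ss}$ are $L$-invariant, each simple component $A_i=e_iA_{ss}e_i$ is an $L$-subalgebra, and therefore the $L$-simple components $B_j$ coincide exactly with the ordinary simple components $A_i$. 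This is precisely where a permutation of the $A_i$, possible for a group action, is ruled out; in particular no $L$-simple block can be larger than a single matrix block, so the building blocks feeding both formulas are identical.

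It then remains to check that the two connectedness conditions single out the same admissible sequences. Since the components and the radical $J$ are literally the same objects in both formulas, a product $A_{i_1}JA_{i_2}J\cdots JA_{i_r}$ is nonzero in the ordinary sense precisely when it is nonzero as required by Gordienko's criterion; hence the maxima are taken over identical families of subsets with identical dimensions, yielding $\exp^L(A)=\exp(A)$. The main obstacle I anticipate is purely bookkeeping at this last stage: one must make sure that Gordienko's admissibility condition, phrased in terms of the $L$-module structure on the connecting pieces of $J$, is genuinely equivalent to mere nonvanishing of the product $A_{i_1}JA_{i_2}J\cdots JA_{i_r}$. Because $J$ is an $L$-module and each $A_i$ is $L$-invariant, any nonzero such product can be refined to respect the $L$-action without loss, so the equivalence should go through; verifying this compatibility carefully, rather than the structural reduction, is the delicate part of the proof.
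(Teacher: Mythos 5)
There are two genuine gaps. First, your structural reduction rests on the claim that one can choose the maximal semisimple subalgebra $A_{ss}$ to be $L$-invariant ``by the separability and cohomology-vanishing arguments underlying the Gordienko--Kochetov theory.'' This is exactly what fails for general $L$: an $L$-invariant Wedderburn--Malcev complement is only known to exist when $L$ is semisimple (the case Gordienko and Kochetov already settled), and the paper points to an explicit counterexample for non-semisimple $L$ in which $B^L\nsubseteq B$ for \emph{every} maximal semisimple subalgebra $B$. Your central-idempotent computation $d(e)=2ed(e)\Rightarrow d(e)=0$ silently assumes $d(e)\in A_{ss}$, i.e.\ the very invariance in question; in general $d(e)$ has a nonzero component in $J$ (take $d=\ad_j$ with $ej=j$, $je=0$). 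The correct statement, and the one the paper proves (Lemma \ref{prop B_i^L}), is only $B_i^L\subseteq B_i+J$. The paper sidesteps the invariance problem by passing to $A/J$, which \emph{is} an $L$-invariant semisimple algebra, and showing (Proposition \ref{Prop semisimple}, Remark \ref{rmk isomorphism B_i and A_i}) that its $L$-simple components are simple and match the $B_i$ up to isomorphism; that is the legitimate route to your conclusion that the building blocks of the two formulas have the same dimensions.

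Second, you misstate Gordienko's admissibility condition and then dismiss the resulting discrepancy as ``bookkeeping.'' The condition in Theorem \ref{L-exp} is not $A_{i_1}JA_{i_2}\cdots JA_{i_r}\neq 0$ but $A_{i_1}^{L}A^{+}A_{i_2}^{L}A^{+}\cdots A^{+}A_{i_r}^{L}\neq 0$, where $A^{+}=A+F\cdot 1$ and $A_{i_s}^{L}$ involves all images under $U(L)$. Deducing $B_{j_1}JB_{j_2}\cdots JB_{j_r}\neq 0$ from this is the actual mathematical content of the reverse inequality $\exp^L(A)\le\exp(A)$, and it is not automatic: one must strip the $U(L)$-exponents off the $b_i$'s one at a time, using the Leibniz rule applied to $b_i=e_ib_i$ with $e_i$ the unit of $B_i$ and an induction on the length of the PBW monomial, and then argue that the connecting elements must in fact lie in $J$ because $B_rB_s=0$ for $r\neq s$. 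This is the paper's Lemma \ref{A_1J...JA_k}, and without it (or an equivalent argument) your proof does not close.
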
 
In the same paper they proved it in case $L$ is a finite dimensional semisimple Lie algebra and in \cite{RizzodosSantosVieira2021} the authors proved that $\exp^L(A)=1$ if and only if $\exp(A)=1$ where $L$ is any Lie algebra. 

In this paper we give a positive answer to the Gordienko-Kochetov's conjecture for any Lie algebra $L$.
It is important to highlight that this conjecture is no longer true if we consider other type of algebras with additional structure such for example
algebras with involution \cite{GiambMish2001CA} or algebras graded by an abelian finite group \cite{Valenti2011}.

 One of the main advantages of the exponent
is to have an integral scale allowing us to measure the growth of any non-trivial variety of algebras. So it becomes important to study
varieties with the same exponent and to determine those with the most distinguished properties.
In this setting  a celebrated theorem of Kemer characterizes varieties generated by an algebra of exponent less or equal to one as follows. If $G$ is the infinite dimensional Grassmann algebra over $F$ and $UT_2$ is the algebra of $2\times 2$ upper triangular matrices over $F$, then $\exp(A)\leq 1$ if and only if $G$, $UT_2$ do not belong to the variety $ \mathcal{V}$ generated by $A$. 

Now, if $\mathcal{V}$ is a variety of algebras, the growth of $\mathcal{V}$ is the growth of the
sequence of codimensions of a generating algebra. 
Hence the varieties generated by $G$ and $UT_2$ are the only of almost polynomial growth, i.e.,  they grow
exponentially but any proper subvariety grows polynomially. Similar results were also proved in the setting of varieties of graded algebras \cite{GiambMishZai,Valenti2011}, algebras with involution \cite{GiambMish}, algebras with superinvolution \cite{GiambrunoIoppoloLaMattina2016} and algebras with pseudoinvolution \cite{IoppoloMartino2018}.

Clearly $UT_2$ generates also $L$-variety, i.e, variety of $L$-algebras, of almost polynomial growth if we suppose that $L$ acts trivially on it. Another useful example of algebras with derivations generating an $L$-variety of almost polynomial growth is $UT_2^\varepsilon$, the $L$-algebra $UT_2$ with
$F\varepsilon$-action, where $\varepsilon$ is the inner derivation
induced by $e_{22}$ (see \cite{GiambrunoRizzo2018}). In \cite{Rizzo2021} the author proved that, up to $T_L$-equivalence, $UT_2$ and $UT_2^\varepsilon$ are the only $2\times 2$ upper triangular matrices generating $L$-varieties of almost polynomial growth and in \cite{MartinoRizzo2022} the authors completely classify all their subvarieties. Notice that for what concern the infinite dimensional Grassmann algebra so far we know just that $G$  generates  $L$-variety of almost polynomial growth if $L$ acts trivially on it and it does not generate a $L$-variety of almost polynomial growth if $L$ acts on it as a finite dimensional abelian Lie algebra (see \cite{Rizzo2020ART}).

Inspired by the above results
here we characterize $L$-varieties $\mathcal{V}$ having polynomial growth and we reach our goal in the setting of varieties generated
by finite dimensional $L$-algebras $A$ where $L$ is a solvable Lie algebra. In this setting we prove that $\mathcal{V}$ has polynomial growth
if and only if $UT_2, UT_2^\varepsilon \notin \mathcal{V}$. As a
consequence, there are only two varieties with
derivations generated by a finite dimensional algebra with almost polynomial growth.

\section{Preliminaries}
Throughout this paper $F$ will denote a field of characteristic zero, $A$ a finite dimensional associative $F$-algebra and $L$ a fixed Lie $F$-algebra. 

Recall that a \textit{derivation} of $A$ is a linear map $\delta:A\to A$ such that it satisfies the \textit{Leibniz rule}: for all $a,b\in A$
$$(ab)^\delta=a^\delta b + a b^\delta.$$ 
If $a\in A$, then the $F$-liner map $\ad_a:A\to A$ defined by $ x^{\ad_a}=[x,a]=xa-ax$ for all $x\in A$, is a derivation on $A$ called \textit{inner derivation} induced by the element $a$.  
Notice that the set of all derivation of $A$ is a Lie $F$-algebra denoted by $\Der(A)$ and the set $\ad (A)$ of all inner derivations of $A$ is a Lie ideal of $\Der(A)$. 
Throughout the paper, we will adopt the exponential notation for derivations, hence derivations will compose from left to right.

A Lie algebra $L$ acts on $A$ by derivation if there exists a homomorphism of Lie algebras $\varphi:L\to \Der(A)$. In particular, if $\bar{L}$ is a Lie subalgebra of $\Der(A)$, then we say that $L$ acts on $A$ as the Lie algebra $\bar{L}$ if $\varphi(L)=\bar{L}$. By the \textit{Poincaré--Birkhoff--Witt Theorem} the $L$-action on $A$ can be naturally extended to an $U(L)$-action, where $U(L)$ is the universal enveloping algebra of $L$ with product right-to-left (opposite to the usual one), in fact $\varphi$ can be naturally extended to an homomorphism of associative algebras $\phi:U(L)\to \E_F(A)$. In this way $A$ becomes a right $U(L)$-module and we call it \textit{algebra with derivations} or \textit{$L$-algebra}. Note also that by the \textit{Poincaré--Birkhoff--Witt Theorem}, if $\{\delta_i \ | \  i\in I\}$ is an ordered basis of $L$, then $U(L)$ has a basis $\{\delta_{i_1}\cdots \delta_{i_p} \ | \ i_1< \dots < i_p, \; i_k\in I, \; p\geq 0\}$. Thus $U(L)=U'(L)\oplus F\cdot1$ as vector spaces, where $U'(L)$ is the non-unital universal enveloping algebra of $L$ and $1=1_{U(L)}$ is the unit of $U(L)$.

Let $(A,\overline{L})$ and $(B, L')$ be two $L$-algebras, i.e., $L$ acts on $A$ as the Lie algebra $\overline{L}$ and on $B$ as the Lie algebra $L'$. An isomorphism of algebras $ \psi: A \to B$
is said to be a isomorphism of $L$-algebras if there exists an homomorphism of Lie algebra $\phi:\bar{L}\to L'$ such that $\psi(a^\delta) = \psi(a)^{\phi(\delta)}$, for any $a\in A$ and $\delta \in \overline{L}$ and in this case we write $A\cong_L B$. Notice that in case the two $L$-algebras $(A,\overline{L})$ and $(B, L')$ are isomorphic just as ordinary algebras we write $A\cong B$.

\smallskip

In order to define what a polynomial identity is for this kind algebras, we need to
introduce the free algebra with derivations. 
Given a basis $\mathcal{B}_{U(L)}=\{d_{i}\ | \  i\geq 0 \}$ of $U(L)$ and a countable set $X=\{x_1,x_2, \dots \}$, we let $F\langle X|L\rangle$ be the free associative algebra over $F$ with free formal generators $x_i^{d_j}$, $i >0,$ $ j\geq 0$ where we identify $x_i=x_i^{1}$, $1=d_0\in U(L)$. Moreover, for all $d=\sum_{i\in I}\alpha_i d_i\in U(L)$, where only a finite number of $\alpha_i\in F$ are non-zero, we set $x^d:= \sum_{i\geq 0}\alpha_i x^{d_i}$. Then $F\langle X |L\rangle$ has a structure of $L$-algebra by setting
$$
(x_{i_{1}}^{d_{j_{1}}} x_{i_{2}}^{d_{j_{2}}}\dots x_{i_{n}}^{d_{j_{n}}})^\delta =x_{i_{1}}^{ d_{j_{1}}\delta} x_{i_{2}}^{d_{j_{2}}}\dots x_{i_{n}}^{d_{j_{n}}}+ x_{i_{1}}^{ d_{j_{1}}} x_{i_{2}}^{ d_{j_{2}}\delta}\dots x_{i_{n}}^{d_{j_{n}}}+ \dots+x_{i_{1}}^{d_{j_{1}}} x_{i_{2}}^{d_{j_{2}}}\dots x_{i_{n}}^{ d_{j_{n}}\delta}
$$
for all $\delta\in L$ and $x_{i_{1}}^{d_{j_{1}}}x_{i_{2}}^{d_{j_{2}}}\dots x_{i_{n}}^{d_{j_{n}}}\in F\langle X|L\rangle$. Thus $F\langle X |L\rangle$ is called \textit{free algebra with derivations} or \textit{free $L$-algebra} and its elements are called \textit{differential polynomials} or \textit{$L$-polynomials}.  Note that our definition
of $F\langle X |L\rangle$  depends on the choice of the basis $\mathcal{B}_{U(L)}$ in $U(L)$. However such algebras can be identified in a natural way.

A differential polynomial $f(x_1, \dots , x_n)\in F\langle X| L\rangle$ is a differential identity, or an $L$-identity, of the $L$- algebra $A$, if $f(a_{1},\dots,a_{n})=0$ for any $a_{i}\in A$, and in this case we write $f\equiv 0$. We denote by $\I^L(A)$ the set of differential identities of $A$, which is a $T_L$-ideal of the free algebra with derivations, i.e., an ideal invariant under all endomorphisms $\psi$ of $F\langle X|L\rangle$ such that $\psi(f^d)=\psi(f)^d$ for all $f \in F\langle X| L\rangle$ and $d\in U(L)$. We shall use the following notation:  if $(A,\overline{L})$ is an $L$-algebra, i.e., there exists a surjective homomorphism of Lie algebra $\varphi:L\to \bar{L}\subseteq \Der(A)$, then  
in the set of generators of the $T_L$-ideal $\I^L(A)$ of $(A,\bar{L})$ we omit the differential identities of the type $x^\delta \equiv 0$ for all $\delta \in \ker\varphi$.

As in the ordinary case, in characteristic zero $\I^L(A)$ is completely determined by its multilinear polynomials. We denote by $P_n^L=\spn_F\{x_{\sigma(1)}^{d_{i_1}}\dots x_{\sigma(n)}^{d_{i_n}} \ | \ \sigma\in S_{n} ,d_{i_k}\in  \mathcal{B}_{U(L)} \}$ the vector space of multilinear differential polynomials in the variables $x_{1},\dots,x_{n}$, $ n\geq 1$ and
$$
c_n^L(A)=\dim_F \dfrac{P_n^L}{P_n^L \cap \I^L(A)}
$$
is called \textit{$n$th differential codimension}, or \textit{$L$-codimension}, of $A$. Notice that $c_n^L(A)$ is well defined because $U(L)$ acts on $A$ as a suitable subalgebra of $\E_F(A)$ and $A$ is a finite dimensional $L$-algebra. 

In order to capture the exponential rate of growth of the above sequence of codimensions, in \cite{Gordienko2013JA} the author
proved that for any finite dimensional $L$-algebra $A$, the limit
$$\exp^L(A)= \lim_{n\to \infty}\sqrt[n]{c_n^L(A)}$$
exists and is a non-negative integer, called \textit{differential exponent}, or \textit{$L$-exponent}, of $A$. As a consequence, if $A$ is a finite dimensional $L$-algebra, then the sequence of differential codimensions $c_n^{L}(A)$  growths exponentially or is polynomially bounded.

Recall that if $A$ is an $L$-algebra, then the variety of algebras with derivations generated by $A$ is denoted by $\V^{L}(A)$ and is called $L$-variety. The growth of $\mathcal{V}= \V^{L}(A)$ is the growth of the sequence $c_{n}^{L}(\mathcal{V})=c_{n}^{L}(A)$, $n\geq 1$. We say that the $L$-variety $\mathcal{V}$ has polynomial growth if $c_{n}^{L}(\mathcal{V})$ is polynomially bounded and $\mathcal{V}$ has almost polynomial growth if  $c_{n}^{L}(\mathcal{V})$ is not polynomially bounded but every proper $L$-subvariety of $\mathcal{V}$ has polynomial growth.

\smallskip

Notice that the theory of differential identities generalizes the ordinary theory of polynomial identities. In fact, any algebra $A$ can be regarded as $L$-algebra by letting $L$ act on $A$ trivially,
i.e., $L$ acts on $A$ as the trivial Lie algebra and $U(L)\cong F$.
Moreover, since $U(L)$ is an algebra with unit, we can identify in a natural way $P_n$ with a subspace of $P_n^L$. Hence $P_n\subseteq P^L_n$ and $P_n\cap \I(A)=P_n\cap \I^L(A)$. As a consequence, if for all $n\geq 1$ we denote by
$$c_n(A)=\dim_F \dfrac{P_{n}}{P_{n}\cap \I(A)},$$
the sequence of (ordinary) \textit{codimension} of $A$, then we have the following relation 
 \begin{equation}\label{relazione codimensioni}
 	c_n(A)\leq c_n^L(A), \quad \mbox{for all } n\geq 1.
 \end{equation}
Moreover, it is well known that if $A$ is an associative algebra (not necessarily finite dimensional) over a field of characteristic zero the limit
$$\exp(A)= \lim_{n\to \infty}\sqrt[n]{c_n(A)}$$
always exists and is a non-negative integer called the \textit{exponent} of $A$ (see \cite{GiambrunoZaicev1998,GiambrunoZaicev1999}).  By relation \eqref{relazione codimensioni} we have that if $A$ is a finite dimensional $L$-algebra, then $\exp(A)\leq\exp^L(A)$. In what follows we shall prove that actually $\exp(A)=\exp^L(A)$.

\section{Finite dimensional algebras with derivations and their codimensions}

In this section we shall prove the Gordienko-Kochetov's conjecture.
To this end we start by presenting the structure and properties of finite dimensional $L$-algebras.

Let $A$ be a finite dimensional $L$-algebra. If $B\subseteq A$, then we denote by $B^L$ the set all $b^d$ such that $b\in B$ and $d\in\mathcal{B}_{U(L)}$, where $\mathcal{B}_{U(L)}$ is a basis of $U(L)$. We say that  $B$ is $L$-invariant if $B^L\subseteq B$.
Thus we have the following definition.
 An ideal (subalgebra) $I$ of $A$ is an $L$-ideal (subalgebra) if it is $L$-invariant. 
 
 Let us denote by $J=J(A)$ the Jacobson radical of $A$. It is well known that $J(A)$ is an $L$-ideal of $A$ \cite[Theorem 4.2]{Hoch} and that $A$ is called semisimple if and only
 if $J(A) = 0$.

Since $A$ is  finite dimensional, then by Wedderburn-Malcev decomposition for associative algebras (see \cite[Theorem 3.4.3]{GiambrunoZaicev2005book}), we can find a maximal semisimple subalgebras $B\subseteq A$ such that
$$
	A=B+J.
$$
Moreover,
$B=B_1 \oplus\cdots \oplus  B_k$
where $B_1, \dots, B_k$ are simple algebras and are all minimal ideals of $B$.
In case $L$ is a semisimple Lie algebra, then such decomposition is $L$-invariant, i.e., we can find a maximal semisimple subalgebra $B$ that is $L$-invariant (\cite[Theorem 4]{GordienkoKochetov2014}). However if $L$ is not semisimple, although $J(A)$ is an $L$-ideal it may not exist an $L$-invariant Wedderburn-Malcev decomposition, i.e., it may happen that $B^L\nsubseteq B$ for every maximal semisimple subalgebra $B$ of $A$ (see \cite[Example 3]{RizzodosSantosVieira2021}).

\begin{lemma}\label{prop B_i^L}
	Let $A=B_1 \oplus \dots \oplus B_k + J$ a finite dimensional algebra where $B_1, \dots, B_k$ are simple algebras. If $A$ is an $L$-algebra, then 	$B_i^L \subseteq B_i +J$ for all $1\leq i \leq k$. Moreover, $B_i^L \subseteq J$ whenever $B_i\cong F$.
\end{lemma}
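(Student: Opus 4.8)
The plan is to reduce the statement to the action of a single derivation $\delta\in L$ and then bootstrap to all of $U(L)$, working throughout modulo the radical. Since $J=J(A)$ is an $L$-ideal, each $\delta\in L$ induces a derivation $\bar\delta$ on the quotient $\bar A=A/J$, and the canonical projection $\pi\colon A\to\bar A$ is $L$-equivariant, i.e.\ $\pi(a^\delta)=\pi(a)^{\bar\delta}$. Because $B\cap J=0$, the Wedderburn--Malcev decomposition makes $\pi$ restrict to an algebra isomorphism $B\xrightarrow{\sim}\bar A$, so $\bar A=\bar B_1\oplus\cdots\oplus\bar B_k$ with $\bar B_i=\pi(B_i)$ simple, and $\pi^{-1}(\bar B_i)=B_i+J$. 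Thus it suffices to prove that $\bar B_i$ is $\bar\delta$-invariant (which will give $B_i^\delta\subseteq B_i+J$) and that, when $B_i\cong F$, one has $\pi(B_i^\delta)=0$ (which will give $B_i^\delta\subseteq J$).

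The heart of the argument is a statement about derivations of the semisimple algebra $\bar A$: every central idempotent is annihilated. Let $\bar e_i$ denote the unit of $\bar B_i$; these are orthogonal central idempotents of $\bar A$ summing to $1_{\bar A}$. For any idempotent $e$, applying the Leibniz rule to $e=e^2$ gives $e^\delta=e^\delta e+e\,e^\delta$, and a short Peirce computation (multiplying on the left and right by $e$ and $1-e$) shows that the diagonal Peirce components $e\,e^\delta e$ and $(1-e)e^\delta(1-e)$ vanish, so $e^\delta=e\,e^\delta(1-e)+(1-e)e^\delta e$. When $e=\bar e_i$ is \emph{central}, the off-diagonal Peirce spaces $\bar e_i\bar A(1-\bar e_i)$ and $(1-\bar e_i)\bar A\,\bar e_i$ are both zero, whence $\bar e_i^{\,\bar\delta}=0$. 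Any $b\in\bar B_i$ satisfies $b=\bar e_i b\,\bar e_i$, so differentiating and using $\bar e_i^{\,\bar\delta}=0$ yields $b^{\bar\delta}=\bar e_i\,b^{\bar\delta}\bar e_i\in\bar e_i\bar A\,\bar e_i=\bar B_i$; hence $\bar B_i$ is $\bar\delta$-invariant.

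Lifting through $\pi$ gives $B_i^\delta\subseteq\pi^{-1}(\bar B_i)=B_i+J$ for every $\delta\in L$. For the second claim, suppose $B_i\cong F$, so $B_i=Fe_i$ with $\pi(e_i)=\bar e_i$. Since $\pi(e_i^\delta)=\bar e_i^{\,\bar\delta}=0$, we get $e_i^\delta\in J$, and therefore $B_i^\delta=F\,e_i^\delta\subseteq J$.

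It then remains to pass from $\delta\in L$ to an arbitrary $d\in\mathcal B_{U(L)}$. By PBW each such $d$ is a product $\delta_{i_1}\cdots\delta_{i_p}$ of elements of $L$ (with $p=0$ giving $d=1$), and the $d$-action is the corresponding iterated application of derivations. Since $J^L\subseteq J$, the subspace $B_i+J$ is $L$-invariant, namely $(B_i+J)^\delta\subseteq B_i^\delta+J^\delta\subseteq B_i+J$, and an induction on $p$ gives $B_i^d\subseteq B_i+J$ for all $d$ (the case $d=1$ being trivial), proving the first assertion. When $B_i\cong F$, the same induction applied to the $L$-invariant subspace $J$, starting from $B_i^{\delta_{i_1}}\subseteq J$, shows $B_i^d\subseteq J$ for every $d$ of length $p\geq 1$, which is the content of the second assertion. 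The only genuinely delicate point I anticipate is the annihilation of central idempotents in the middle paragraph; note that the Peirce argument is valid over an arbitrary base field, so no algebraic closedness of $F$ is required and the simple blocks $B_i$ need not be matrix algebras over $F$.
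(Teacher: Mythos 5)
Your proof is correct, but it takes a genuinely different route at the key step. The paper works inside $A$ itself and invokes Hochschild's decomposition theorem for derivations of a finite dimensional algebra (\cite[Theorem 4.3]{Hoch}): every $\delta\in\Der(A)$ is $\ad_b+\ad_j+\delta'$ with $b\in B$, $j\in J$ and $\delta'$ vanishing on $B$, after which $a^\delta=a^{\ad_b}+a^{\ad_j}\in B_i+J$ for $a\in B_i$, and the one-dimensional case follows because $[e_i,b_i]=0$ in the commutative block $B_i$. You instead pass to the semisimple quotient $\bar A=A/J$ (legitimate, since $J$ is $L$-invariant and $\pi|_B$ is an isomorphism identifying $\pi^{-1}(\bar B_i)$ with $B_i+J$) and prove there, by an elementary Peirce computation, that every derivation of $\bar A$ kills the central idempotents $\bar e_i$ and hence preserves each block $\bar B_i=\bar e_i\bar A\bar e_i$; the ``moreover'' part then falls out of $\bar e_i^{\,\bar\delta}=0$. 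Both proofs finish identically with the PBW induction on the length of $d$, using that $B_i+J$ (respectively $J$) is $L$-invariant, and both implicitly read the second assertion as concerning basis elements of $U'(L)$, i.e.\ of positive length, which you rightly make explicit. What the paper's route buys is a single citation that does all the structural work; what yours buys is a self-contained, more elementary argument that needs nothing beyond the $L$-invariance of $J$ and the existence of a Wedderburn--Malcev complement, and, as you note, it makes transparent that no algebraic closedness of $F$ is used.
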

\begin{proof}
	Let  $\delta\in \Der(A)$. 
	By \cite[Theorem 4.3]{Hoch} $\delta = \ad_b + \ad_j +\delta'$, where $b\in B$, $j\in J$ and $\delta'\in \Der(A)$ is such that $a^{\delta'}=0$ for all $a \in B$. Let $1\leq i\leq k$ and $a\in B_i$. Then $a^\delta=a^{\ad_b}+ a^{\ad_j}$. Since $B_i B_r=0$, $r \neq i$,  $a^{\ad_b}\in B_i$. Thus since $J$ is an ideal of $A$, $a^\delta\in B_i +J$ and we have proved that $B_i^\delta\subseteq B_i +J$ for all $1\leq i \leq k$ and $\delta \in \Der(A)$. Now if $d\in U(L)$, by the Poincaré--Birkhoff--Witt Theorem we may assume that there exists $s\geq 0$ such that $d=\delta_1 \dots \delta_s$ where  $\delta_i\in L$ for all $1\leq i \leq s$. If $s=0$, then $d=1_{U(L)}$ and there is nothing to prove. So let us suppose that $s>0$. Since $B_i^\delta \subseteq B_i +J$ for any choose of $\delta \in L$ and $J$ is an $L$-ideal, then $B_i^L \subseteq B_i +J$.
	
	Now suppose that  $B_i\cong F$ for some $1\leq i \leq k$. Then $ B_i=\spn_F\{e_i\}$. Since $B_i B_r=0$, $r \neq i$, $e_i^\delta=e_i^{ \ad_b + \ad_j +\delta'}=e_i^{\ad_j}\in J$, where $\delta = \ad_b + \ad_j +\delta'\in \Der(A)$ with $b\in B$, $j\in J$ and $\delta'\in \Der(A)$ is such that $B^{\delta'}=0$. Thus since $J$ is an $L$-ideal, the proof is complete.
\end{proof}

Recall that an algebra $A$ is $L$-simple if $A^2\neq \{0\}$ and $A$ has no non-trivial $L$-ideals. Thus we have the following.

\begin{proposition}\label{Prop semisimple}
	Let $A$ be a finite dimensional $L$-algebra. Then
		\begin{itemize}
			\item[1)] If $A$ is semisimple, then $A=A_1 \oplus \dots \oplus A_m$ where $A_1, \dots , A_m$  are $L$-simple algebras and are all minimal $L$-ideals of $A$.
			
			\vspace{0.1mm}
			\item[2)] If $A$ is $L$-simple, then $A$ is simple.
		\end{itemize}
\end{proposition}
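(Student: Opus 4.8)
The plan is to prove both statements by exploiting the interplay between ordinary ideals and $L$-ideals, using Lemma~\ref{prop B_i^L} as the key structural tool and the semisimplicity hypothesis to control the radical.

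\medskip

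For part 1), I would start from the ordinary Wedderburn decomposition of the semisimple algebra $A$ as a direct sum of simple ideals, $A = B_1 \oplus \cdots \oplus B_k$ (here $J=0$, so the decomposition $A=B+J$ from the Wedderburn--Malcev theorem reduces to $A=B$). The natural strategy is to group these simple summands into blocks that are permuted among themselves by the $L$-action. Concretely, by Lemma~\ref{prop B_i^L} we have $B_i^L \subseteq B_i$ for each $i$ (since $J=0$), so each $B_i$ is already $L$-invariant. Thus each $B_i$ is an $L$-subalgebra, and since it is a minimal ordinary ideal it is in particular a minimal $L$-ideal once we verify it has no proper nonzero $L$-ideal. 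A proper nonzero $L$-ideal of $B_i$ would be a proper nonzero ordinary ideal of the simple algebra $B_i$, which is impossible; hence each $B_i$ is $L$-simple. Setting $m=k$ and $A_i = B_i$, I would conclude that $A = A_1 \oplus \cdots \oplus A_m$ is a direct sum of $L$-simple algebras, each a minimal $L$-ideal. The only subtlety is confirming these are \emph{all} the minimal $L$-ideals; this follows because any minimal $L$-ideal is in particular an ordinary ideal of the semisimple algebra $A$, hence a sum of some $B_i$'s, and minimality forces it to equal a single $B_i$.

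\medskip

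For part 2), suppose $A$ is $L$-simple; I want to show it is simple as an ordinary algebra. Since $A^2 \neq 0$ and $A$ is finite dimensional, I would first argue that $A$ must be semisimple: the Jacobson radical $J(A)$ is an $L$-ideal by \cite[Theorem 4.2]{Hoch}, so by $L$-simplicity either $J(A)=0$ or $J(A)=A$; the latter would force $A$ to be nilpotent, contradicting $A^2 \neq 0$ (more precisely, a nonzero nilpotent algebra cannot be $L$-simple since its square $A^2 \subsetneq A$ is a proper nonzero $L$-ideal). Hence $J(A)=0$ and $A$ is semisimple. Now I apply part 1): $A = A_1 \oplus \cdots \oplus A_m$ is a direct sum of $L$-simple minimal $L$-ideals. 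But $A$ itself has no nontrivial $L$-ideals, so $m=1$, i.e.\ $A=A_1$ is a single block. The final step is to show this single block is ordinarily simple. Writing $A=B_1 \oplus \cdots \oplus B_k$ in the ordinary Wedderburn decomposition, Lemma~\ref{prop B_i^L} gives $B_i^L \subseteq B_i$, so each $B_i$ is an $L$-ideal of $A$; by $L$-simplicity $k=1$, and $A=B_1$ is simple.

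\medskip

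The main obstacle I anticipate is in part 2), precisely at the point where I claim each ordinary simple summand $B_i$ is $L$-invariant. This is exactly what Lemma~\ref{prop B_i^L} delivers when $J=0$, so the real work is ensuring the radical vanishes; the delicate case is when some $B_i \cong F$, where the second assertion of Lemma~\ref{prop B_i^L} gives $B_i^L \subseteq J$. Once $J=0$ is established this difficulty evaporates, so the crux is the reduction to the semisimple case via the nilpotency argument. I would take care to justify that $J(A)=A$ genuinely contradicts $A^2\neq 0$: here I would invoke that $J(A)$ is nilpotent, so $A=J(A)$ nilpotent forces $A^2\subsetneq A$ to be a proper $L$-ideal (it is an ordinary ideal and $L$-invariant since $A^L\subseteq A$ and products of $L$-invariant sets are $L$-invariant by the Leibniz rule), and iterating leads to $A^2 = 0$, the desired contradiction.
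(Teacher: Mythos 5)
Your argument for part 1) is essentially identical to the paper's: take the ordinary Wedderburn decomposition $A=B_1\oplus\cdots\oplus B_k$, note that Lemma \ref{prop B_i^L} with $J=0$ gives $B_i^L\subseteq B_i$, and conclude each $B_i$ is an $L$-invariant simple, hence $L$-simple, algebra. (Your extra check that these exhaust the minimal $L$-ideals --- any minimal $L$-ideal is an ordinary ideal of a semisimple algebra, hence a sum of $B_i$'s, hence a single $B_i$ by minimality --- is a detail the paper leaves implicit; it is correct and harmless.) Where you genuinely diverge is part 2): the paper does not prove it at all but simply cites Lemma 9 of Gordienko--Kochetov, whereas you give a short self-contained argument. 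Your argument is sound: $J(A)$ is an $L$-ideal by Hochschild, so $L$-simplicity forces $J(A)=0$ or $J(A)=A$; the latter makes $A$ nilpotent, and then $A^2$ is an $L$-ideal (by the Leibniz rule and PBW) which can be neither $A$ (nilpotency) nor $0$ (definition of $L$-simple), a contradiction; so $A$ is semisimple, each ordinary simple summand $B_i$ is an $L$-ideal by Lemma \ref{prop B_i^L}, and $L$-simplicity forces $k=1$. The detour through part 1) before passing to the $B_i$'s is redundant but not wrong. What your route buys is independence from the external reference, at the cost of re-deriving a known structural fact; both are valid.
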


\begin{proof}
	Suppose that $A$ is semisimple. Then since $J(A)=0$, we have that $A=B_1\oplus \dots \oplus B_k$ where $B_1, \dots, B_k$ are simple algebras and are all minimal ideals of $A$. By Lemma \ref{prop B_i^L} it follows that $B_i^L\subseteq B_i$ for all $1\leq i \leq k$. Thus $B_1, \dots, B_k$ are $L$-invariant simple algebras and, as a consequence, $L$-simple. So the first statement is proved.
	As for the second, it is proved in \cite[Lemma 9]{GordienkoKochetov2014}.
\end{proof}

\begin{lemma}
	\label{A_1J...JA_k}
	Let $A=B_1 \oplus \dots \oplus B_k + J$ a finite dimensional algebra over an algebraically closed field $F$ of characteristic zero where $B_1, \dots, B_k$ are simple algebras  and let $A^+=A + F \cdot 1$. If $A$ is an $L$-algebra and $B_{1}^L A^+ B_2^L  \cdots B_{k-1}^L A^+  B_{k}^L\neq 0$, then  $B_1 J B_2 \cdots B_{k-1} J B_k \neq 0$
\end{lemma}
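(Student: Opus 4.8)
The plan is to exploit the identity element $e_i$ of each simple summand $B_i$: I would show that every factor $B_i^L$ is trapped inside the two-sided ideal of $A^+$ generated by $e_i$, and then collapse the hypothesised product using only the unital structure of $A^+$ together with the Peirce decomposition attached to the orthogonal idempotents $e_1,\dots,e_k$ (they are orthogonal since $B_iB_r=0$ for $r\ne i$). The whole argument is $L$-free except for this first containment.

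So the key step is the claim that, setting $I_i:=A^+e_iA^+$, one has $B_i^L\subseteq I_i$. Since $B_i=e_iB_ie_i\subseteq A^+e_iA^+=I_i$, it is enough to prove that $I_i$ is an $L$-ideal, and as $I_i$ is already a two-sided ideal of $A^+$ this reduces to checking that $e_i^{\delta}\in I_i$ for every $\delta\in L\subseteq\Der(A)$. This follows purely from idempotency: extending $\delta$ to $A^+$ by $1^{\delta}=0$,
\[
e_i^{\delta}=(e_i^2)^{\delta}=e_i^{\delta}e_i+e_ie_i^{\delta}\in A^+e_i+e_iA^+\subseteq I_i .
\]
Invariance under all of $U(L)$ then follows by applying the generating derivations one at a time and using that $I_i$ absorbs multiplication on both sides; hence $B_i^L\subseteq I_i$.

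Granting this, the rest is formal. From the hypothesis and the containments $B_i^L\subseteq A^+e_iA^+$ I get
\[
0\ne B_1^LA^+\cdots A^+B_k^L\subseteq (A^+e_1A^+)A^+\cdots A^+(A^+e_kA^+)=A^+e_1A^+e_2A^+\cdots A^+e_kA^+,
\]
the last equality being $A^+A^+=A^+$. Because $A^+$ is unital, stripping the outermost factors yields $e_1A^+e_2A^+\cdots A^+e_k\ne 0$, and regrouping via $e_je_j=e_j$ rewrites this as $(e_1A^+e_2)(e_2A^+e_3)\cdots(e_{k-1}A^+e_k)$. The elementary Peirce computation $e_iA^+e_j=e_iJe_j$ for $i\ne j$ (valid since $e_iBe_j=0$ and $e_ie_j=0$) converts each corner into a radical corner, so $(e_1Je_2)(e_2Je_3)\cdots(e_{k-1}Je_k)\ne 0$; taking $b_i=e_i\in B_i$ shows this space lies inside $B_1JB_2\cdots JB_k$, which is therefore nonzero.

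The main obstacle is precisely the first step, $B_i^L\subseteq A^+e_iA^+$. The naive bound $B_i^L\subseteq B_i+J$ from Lemma~\ref{prop B_i^L} is too crude here: the radical part of an element $b^d$ with $b\in B_i$ need not sit in a single Peirce component and can be dragged across several of them by iterated derivations, so one cannot simply substitute $B_i$ for $B_i^L$ and bookkeep indices. Passing to the $L$-invariant ideal generated by $e_i$ is exactly what tames this, because it retains the one feature that survives the collapse — that everything remains anchored at $e_i$ — and this is what forces the chain of radical corners $e_iJe_{i+1}$.
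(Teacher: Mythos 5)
Your proof is correct, and it takes a genuinely different route from the paper's. The paper argues element by element: starting from a nonzero product $b_1^{d_1}a_1b_2^{d_2}a_2\cdots a_{k-1}b_k^{d_k}$, it writes $b_i^{d_i}=(e_ib_i)^{d_i}$, expands by the Leibniz rule, and runs an induction on the PBW length of $d_i$ to show that each factor $b_i^{d_i}$ can be traded for an honest element $\bar b_i\in B_i$ while the derived pieces $e_i^{d_{\mathcal P}}$ are absorbed into the adjacent $A^+$ factors; the orthogonality $B_rB_s=0$ then forces the connecting elements into $J$. You instead compress the entire use of the derivation action into the single observation that $I_i=A^+e_iA^+$ is $L$-invariant, which drops out of idempotency via $e_i^{\delta}=e_i^{\delta}e_i+e_ie_i^{\delta}$, and you reduce the remainder to the purely associative Peirce identity $e_iA^+e_j=e_iJe_j$ for $i\neq j$. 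All your individual steps check out: $B_i=e_iB_ie_i\subseteq I_i$ together with $\delta$-invariance gives $B_i^L\subseteq I_i$ for every PBW monomial; stripping the outer $A^+$ factors and regrouping via $e_j=e_je_j$ are legitimate because $A^+$ is unital; and $e_iA^+e_j=e_iJe_j$ holds since $e_iBe_j=0$ and $e_ie_j=0$. Your version is shorter, avoids the induction and the case analysis, and isolates exactly where the $L$-action enters (the invariance of the ideal generated by an idempotent); the paper's version is more explicit in that it exhibits concrete elements $\bar b_i\in B_i$ and $\bar a_i\in J$ realizing the nonzero product, but that extra information is not needed for the application to Theorem~\ref{Theorem on Gordienko-kochetov's conjecture}. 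Your closing remark is also apt: the crude bound $B_i^L\subseteq B_i+J$ of Lemma~\ref{prop B_i^L} is indeed insufficient on its own, and both proofs overcome this by exploiting the unit $e_i$ of $B_i$ — the paper dynamically, you structurally.
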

\begin{proof}
	If $J=0$ there is nothing to prove. So let $J\neq 0$. If $k=1$ again there is nothing to prove. So, assume that $k>1$. By hypothesis 
	\begin{equation}\label{condizione 1}
	b_1^{d_1}a_1b_2^{d_2}a_2\dots a_{k-1} b_k^{d_k}\neq 0
	\end{equation}
	 for some $b_i\in B_i$, $d_i\in \mathcal{B}_{U(L)}$ and $a_q\in A^+$ for $1\leq i \leq k$ and $1\leq q \leq k-1$. 
	 We claim that for all $2\leq i \leq k-1$ there exist $\bar{a}_{i-1}, \bar{a}_i \in A^+$ and $\bar{b}_i\in B_i$ such that  $b_{i-1}^{d_{i-1}} \bar{a}_{i-1}\bar{b}_i \bar{a}_i b_{i+1}^{d_{i+1}}\neq 0$.

	 By \eqref{condizione 1} for all $2\leq i \leq k-1$ we have that $b_{i-1}^{d_{i-1}} a_{i-1}b_i^{d_i} a_i b_{i+1}^{d_{i+1}}\neq 0$. If $b_i^{d_i}\in B_i$ the claim is proved. Then let us assume that $b_i^{d_i}\notin B_i$. Since $B_i$ is simple, it is a unitary algebra and we denote by $e_i$ its unit. Without loss generality we may suppose that $d_i=\delta_1 \dots \delta_r$ with $\delta_j\in L$, $1\leq j \leq r$, $r\geq 1$.  We proceed by induction on $r$.
	
	If $r=1$, then $d_i\in L$. By the Leibniz rule and since $e_i$ is the unit of $B_i$, we get that $b_i^{d_i}=e_i^{d_i}b_i + e_i b_i^{d_i}$. Thus it follows that
	$$b_{i-1}^{d_{i-1}} a_{i-1}e_i^{d_i}b_i  a_i b_{i+1}^{d_{i+1}} + b_{i-1}^{d_{i-1}} a_{i-1}e_i b_i^{d_i} a_i b_{i+1}^{d_{i+1}}\neq 0.$$
	 Now, if $b_{i-1}^{d_{i-1}} a_{i-1}e_i^{d_i}b_i  a_i b_{i+1}^{d_{i+1}}\neq 0$, by setting $\bar{a}_{i-1}=a_{i-1}e_{i}^{d_i}\in A^+$, $\bar{b}_i=b_i \in B_i$ and $\bar{a}_i=a_i\in A^+$ we get the desired conclusion. On the other hand, if  $b_{i-1}^{d_{i-1}} a_{i-1}e_i b_i^{d_i} a_i b_{i+1}^{d_{i+1}}\neq 0$, we get $b_{i-1}^{d_{i-1}} \bar{a}_{i-1}\bar{b}_i \bar{a}_i b_{i+1}^{d_{i+1}}\neq 0$ where $\bar{a}_{i-1}=a_{i-1}\in A^+$, $\bar{b}_i=e_i\in B_i$ and $\bar{a}_i= b_i^{d_i} a_i \in A^+$. So let us assume that $r>1$. Then again by Leibniz rule and since $e_i$ is the unit of $B_i$ we have that
	\begin{equation*}
	b_i^{d_i}=e_i^{d_i}b_i + e_i b_i^{d_i}+ \sum_{\mathcal{P}, \mathcal{Q}} e_i^{d_\mathcal{P}} b_i^{d_\mathcal{Q}}
	\end{equation*}
	where $\{\mathcal{P}, \mathcal{Q}\}$ is a partition of the set $\{1, \dots, r\}$ into two disjoint ordered non-empty subsets such that if $\mathcal{P}=\{p_1,\dots, p_s\}$ and $\mathcal{Q}=\{q_1, \dots, q_t\}$, then $d_\mathcal{P}= \delta_{p_1}\cdots \delta_{p_s}$ and $d_\mathcal{Q}=\delta_{q_1}\cdots \delta_{q_t}$. Since by hypothesis we have that  $b_{i-1}^{d_{i-1}} a_{i-1}b_i^{d_i} a_i b_{i+1}^{d_{i+1}}\neq 0$, then 
	\[ b_{i-1}^{d_{i-1}} a_{i-1} e_i^{d_i}b_i a_i b_{i+1}^{d_{i+1}}+ b_{i-1}^{d_{i-1}} a_{i-1} e_i b_i^{d_i} a_i b_{i+1}^{d_{i+1}}+ \sum_{\mathcal{P}, \mathcal{Q}} b_{i-1}^{d_{i-1}} a_{i-1} e_i^{d_\mathcal{P}} b_i^{d_\mathcal{Q}} a_i b_{i+1}^{d_{i+1}}\neq 0. \]
	If $b_{i-1}^{d_{i-1}} a_{i-1} e_i^{d_i}b_i a_i b_{i+1}^{d_{i+1}}\neq 0$ or $ b_{i-1}^{d_{i-1}} a_{i-1} e_i b_i^{d_i} a_i b_{i+1}^{d_{i+1}}\neq 0$ we are done. So, let suppose that there exists a partition $\{\mathcal{P}, \mathcal{Q}\}$ of $\{1, \dots, r\}$ such that  $b_{i-1}^{d_{i-1}} a_{i-1} e_i^{d_\mathcal{P}} b_i^{d_\mathcal{Q}} a_i b_{i+1}^{d_{i+1}}\neq 0$. Since $ a_{i-1} e_i^{d_\mathcal{P}} \in A^+$, by the inductive hypothesis the claim is proved. 
	
	We proved that there exist $\bar{b}_i\in B_i$, $2\leq i \leq k-1$, and $\bar{a}_1, \dots, \bar{a}_{k-1}\in A^+$ such that 
	$b_1^{d_1} \bar{a}_1 \bar{b}_2 \bar{a}_2\dots \bar{a}_{k-1}b_k^{d_k}\neq 0$. Now with a similar argument of above we can prove that there exist $\bar{b}_1\in B_1$, $\bar{b}_k \in B_k$ and $\tilde{a}_1, \tilde{a}_{k-1}\in A^+$ such that $\bar{b}_1\tilde{a}_1 \bar{b}_2 \neq 0$ and $\bar{b}_{k-1}\tilde{a}_{k-1}\bar{b}_k\neq 0$ and, as a consequence, we have that $\bar{b}_1 \tilde{a}_1 \bar{b_2} \bar{a}_2 \dots \tilde{a}_{k-1}\bar{b}_{k}\neq 0$.
	 Since $B_r B_s\neq 0$ for all $1\leq r,s \leq k$, $r \neq s$, it follows that $\bar{a}_1, \dots \bar{a}_{k-1}\in J$ and the lemma is proved.
\end{proof}

Next  we recall the  characterizations of the exponent $\exp(A)$ and  $L$-exponent $\exp^L(A)$ of $A$.

\begin{theorem}\label{exp}\cite[Section 6.2]{GiambrunoZaicev2005book} If $A$ is a finite dimensional algebra over a field of characteristic zero, then 
	\begin{equation*} \label{Characterization of exp}
	\exp(A)=\max\{\dim_F(B_{i_1}\oplus B_{i_2}\oplus\cdots\oplus B_{i_r}) \; | \;  B_{i_1}J B_{i_2}J \cdots J B_{i_r}\neq 0, \;  1\leq r \leq k, i_p\neq i_s, 1\leq p,s\leq r \},
	\end{equation*}
	where $A=B_1\oplus \cdots \oplus B_k+J$ with $B_1,\dots ,B_k$  simple algebras and $J=J(A)$ is the Jacobson radical of $A$.
\end{theorem}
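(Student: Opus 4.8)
The statement identifies the integer invariant $\exp(A)$ with a combinatorial maximum, so I would prove it as a two-sided asymptotic estimate on the codimension sequence. Write $d$ for the integer on the right-hand side and recall $c_n(A)=\dim_F P_n/(P_n\cap\I(A))$. Since codimensions are unchanged under extension of the base field, and the right-hand side is equally insensitive to such an extension (the $B_i$ stay simple, $\dim_F$ is preserved, and the nonvanishing condition $B_{i_1}JB_{i_2}\cdots JB_{i_r}\ne 0$ passes to the algebraic closure), I would first assume $F$ algebraically closed, so that each block is a full matrix algebra $B_i\cong\mathrm{M}_{d_i}(F)$ with $\dim_F B_i=d_i^2$. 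The claim then amounts to $\liminf_n\sqrt[n]{c_n(A)}\ge d$ and $\limsup_n\sqrt[n]{c_n(A)}\le d$, which I would attack through the $S_n$-representation theory of $P_n/(P_n\cap\I(A))$: writing the $n$th cocharacter as $\chi_n(A)=\sum_{\lambda\vdash n}m_\lambda\chi_\lambda$, one has $c_n(A)=\sum_{\lambda\vdash n}m_\lambda f^\lambda$, where $f^\lambda=\deg\chi_\lambda$ is computed by the hook-length formula.

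For the lower bound I would fix an admissible sequence $B_{i_1},\dots,B_{i_r}$ attaining the maximum, relabel $B_{i_p}\cong\mathrm{M}_{d_p}(F)$ with $\sum_p d_p^2=d$, and choose $b_p\in B_{i_p}$, $j_p\in J$ with $w:=b_1 j_1 b_2\cdots j_{r-1}b_r\ne 0$. For large $n$ the plan is to construct a multilinear polynomial, not in $\I(A)$, that is alternating on $\ell\approx n/d$ disjoint sets of variables each of size exactly $d$. Each such set is split into groups of sizes $d_1^2,\dots,d_r^2$, one per block; into the $p$-th group I would substitute the matrix units of $B_{i_p}$ while keeping the radical connectors $j_p$ fixed between consecutive groups. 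Since $B_iB_{i'}=0$ for $i\ne i'$, only the terms respecting the block order survive the antisymmetrization, and these specialise to a nonzero multiple of a $w$-type product. The alternation on $\ell$ sets of size $d$ forces $\chi_n(A)$ to contain an irreducible $\chi_\lambda$ with $\lambda$ close to the rectangle with $d$ rows of length $\ell$; by the hook-length formula $f^\lambda\asymp d^{\,n}$ up to a polynomial factor, whence $c_n(A)\ge\tilde p(n)\,d^{\,n}$ and $\liminf_n\sqrt[n]{c_n(A)}\ge d$.

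For the upper bound the plan is to confine both the support and the degrees appearing in $\chi_n(A)$. Using that $J$ is nilpotent, say $J^{s}=0$, any nonzero multilinear monomial evaluated on a basis of $A=B+J$ uses at most $s-1$ radical substitutions and between them runs through an admissible pattern $B_{i_1}JB_{i_2}\cdots$; this structural restriction should force $m_\lambda=0$ unless the diagram $\lambda$ lies in the infinite hook determined by $d$ (at most $d$ columns can be long, up to a bounded defect). Combining this with the known polynomial bound on the total multiplicity $\sum_\lambda m_\lambda$ and with the asymptotic estimate $f^\lambda\le p(n)\,d^{\,n}$ valid for $\lambda$ in that region, one gets $c_n(A)=\sum_\lambda m_\lambda f^\lambda\le q(n)\,d^{\,n}$, hence $\limsup_n\sqrt[n]{c_n(A)}\le d$.

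The hard part is the upper bound, and specifically its two quantitative inputs: first, proving that the cocharacter support is confined to the hook region governed by $d$, where the structure decomposition $A=B+J$ and the nonvanishing analysis (in the spirit of Lemma \ref{A_1J...JA_k}) must be converted into the vanishing of every partition outside the region; and second, the asymptotic estimate on $f^\lambda$ over that region, which rests on the hook-length formula together with Regev-type exponential estimates. The lower bound is more concrete, but its delicate point is guaranteeing that the central matrix-unit evaluations in each block are not annihilated once multiplied by the radical connectors $j_p$, i.e. that the spliced polynomial is genuinely nonzero on $A$; this is precisely the bookkeeping already carried out in Lemma \ref{A_1J...JA_k}.
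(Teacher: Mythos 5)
The paper does not prove Theorem \ref{exp}: it is quoted from \cite[Section 6.2]{GiambrunoZaicev2005book} and used as a black box, so there is no internal argument to compare your proposal against. Your outline is the standard Giambruno--Zaicev strategy from that source (reduction to the algebraic closure, cocharacter decomposition $c_n(A)=\sum_\lambda m_\lambda f^\lambda$, a highly alternating non-identity for the lower bound, confinement of the cocharacter to a hook plus polynomially bounded multiplicities for the upper bound), so as a roadmap it points at the right proof.

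As a proof, however, it has genuine gaps, and you have in effect labelled them yourself. (i) The reduction to $\overline{F}$ is asserted too quickly: a simple $F$-algebra need \emph{not} stay simple after extension of scalars (e.g. $\mathbb{H}\otimes_{\mathbb{R}}\mathbb{C}\cong M_2(\mathbb{C})$, and a simple algebra whose centre properly contains $F$ splits into several blocks over $\overline{F}$), so the invariance of the right-hand side under base extension is a statement requiring proof, not a parenthetical remark. (ii) The two quantitative inputs you defer are the entire content of the theorem: that a multilinear polynomial alternating on sufficiently many disjoint sets of size $>d$ is an identity of $A$, which is what forces $m_\lambda=0$ outside the hook $H(d,l)$ --- and note the correct confinement is $\lambda_{d+1}\le l$, i.e.\ at most $d$ \emph{rows} are long; combined with $\lambda'_1\le\dim_F A$ this is what yields $f^\lambda\le p(n)\,d^{\,n}$, whereas ``at most $d$ columns can be long'' gives a bound of the wrong order --- and, for the lower bound, that the spliced alternating evaluation through matrix units and radical connectors is genuinely nonzero. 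Neither is established. Finally, Lemma \ref{A_1J...JA_k} of this paper does not supply the missing lower-bound bookkeeping: it runs in the opposite direction, converting a nonzero differential product $B_{j_1}^L A^+\cdots A^+B_{j_r}^L\neq 0$ into the hypothesis $B_{j_1}JB_{j_2}\cdots JB_{j_r}\neq 0$ needed to \emph{apply} Theorem \ref{exp}, not into the non-vanishing of an alternating polynomial. So the proposal is an accurate table of contents for the proof in \cite{GiambrunoZaicev2005book}, but not a proof.
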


\begin{theorem}\label{L-exp}\cite[Theorems 1 and 3]{Gordienko2013JA} Let $A$ be a finite dimensional $L$-algebra over a field of characteristic zero. 
	If $J=J(A)$ is the Jacobson radical of $A$ and $A/J=\overline{A}_1\oplus \cdots \oplus \overline{A}_m$ with $A_1, \dots, A_m$ $L$-simple algebras, then 
	\begin{equation*} \label{Characterization of L-exp}
		\exp^L(A)=\max\{\dim_F(\overline{A}_{i_1}\oplus \overline{A}_{i_2}\oplus\cdots\oplus \overline{A}_{i_r}) \; | \;  A_{i_1}^L A^+  A_{i_2}^L A^+ \cdots A^+  A_{i_r}^L\neq 0, \;  1\leq r \leq m, i_p\neq i_s, 1\leq p,s\leq r \},
	\end{equation*}
	where $A^+=A+ F\cdot 1$ and $A_{i_1}, \dots, A_{i_k}$ are a subalgebra of $A$ (not necessary $L$-invariant) such that $\pi(A_{i_r}) =\overline{A}_{i_r}$ for all $1\leq r \leq k$, where $\pi: A\to A/J$ is the canonical projection. 
\end{theorem}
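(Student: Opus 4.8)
\section*{Proof proposal for Theorem \ref{L-exp}}

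This is Gordienko's characterisation of the $L$-exponent, so the plan is to recover it as the usual two-sided estimate $c_n^L(A)\asymp D^{\,n}$, up to a polynomial factor, where I write $D$ for the asserted maximum, following the representation-theoretic scheme of Giambruno and Zaicev adapted to the $U(L)$-action. First I would extend scalars to the algebraic closure of $F$, which affects neither the $L$-codimensions nor the structural data in the statement; by Proposition \ref{Prop semisimple} each $L$-simple block $\overline{A}_i$ is then simple, hence a full matrix algebra $\Mat_{k_i}(F)$. I would also fix once and for all a tuple $A_{i_1},\dots,A_{i_r}$ realising the maximum, so that by definition $A_{i_1}^L A^+\cdots A^+ A_{i_r}^L\neq 0$ and $D=\sum_{p=1}^r\dim_F\overline{A}_{i_p}$.

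For the lower bound the plan is to manufacture, for each large $n$, a multilinear $L$-polynomial $f_n$, non-zero on $A$, that alternates on roughly $n/D$ pairwise disjoint sets of variables, each of cardinality $D$. Starting from the non-zero product that serves as a seed, I would fill each alternating set with the $D$ basis elements of the selected semisimple part, obtained inside each matrix block $\Mat_{k_{i_p}}(F)$ from the standard Regev alternating (central) polynomials evaluated on matrix units, and glue consecutive blocks with the radical elements supplied by the seed, the block-preserving property $B_i^L\subseteq B_i+J$ of Lemma \ref{prop B_i^L} guaranteeing that the differential variables stay in their blocks modulo $J$. Such an $f_n$ generates an $S_n$-irreducible $\chi_\lambda$ with $\lambda$ essentially a $D$-row rectangle, whose dimension is of order $D^{\,n}$ up to a polynomial factor; hence $c_n^L(A)\ge C n^{-s}D^{\,n}$ and $\exp^L(A)\ge D$.

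For the upper bound I would decompose the $S_n$-cocharacter $\chi_n^L(A)=\sum_{\lambda\vdash n}m_\lambda\chi_\lambda$ and show that $m_\lambda=0$ unless $\lambda$ lies in the region determined by $D$, namely unless at most $D$ of its rows are long, the remaining boxes forming a tail of bounded size. Over this admissible region $\dim\chi_\lambda$ is bounded by $C'' n^{t}D^{\,n}$, so summing $m_\lambda\dim\chi_\lambda$ yields $c_n^L(A)\le C' n^{s'}D^{\,n}$ and $\exp^L(A)\le D$; matching the two exponents forces $\lim_{n\to\infty}\sqrt[n]{c_n^L(A)}=D$, which is the claimed integer.

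The main obstacle is the vanishing input behind the upper bound: one must show that no multilinear $L$-polynomial surviving on $A$ can alternate in more than $D$ independent semisimple directions. Here I would combine two facts. First, by Lemma \ref{prop B_i^L}, applying any $d\in U(L)$ never carries a semisimple element out of its block modulo $J$, so a differential variable evaluated in the semisimple part behaves, modulo the radical, exactly like its ordinary image, and the $U(L)$-action cannot create alternations beyond those already visible in $A/J$. Second, since $J$ is nilpotent, at most $\dim_F J$ radical factors can separate non-vanishing semisimple evaluations, so only a bounded number of boxes can escape the $D$ long rows. Together these cap the number of simultaneously non-trivial alternating semisimple slots by $D$, pinning the cocharacter inside the admissible region and completing the estimate.
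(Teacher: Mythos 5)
This statement is not proved in the paper at all: it is quoted verbatim from Gordienko (Theorems 1 and 3 of \cite{Gordienko2013JA}) and used as a black box, so there is no in-paper argument to compare yours against. Judged on its own, your sketch does follow the strategy that Gordienko actually uses (the Giambruno--Zaicev two-sided estimate adapted to the $U(L)$-action), but as written it has a genuine gap at the decisive point of the upper bound.

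Concretely, the number $D$ in the statement is defined by the non-vanishing of products of the \emph{$L$-orbits} $A_{i_p}^L$ interlaced with $A^+$, not of the subalgebras $A_{i_p}$ themselves. Your upper-bound argument must therefore show: if a multilinear $L$-polynomial alternates on sufficiently many disjoint sets of more than $D$ variables, then every evaluation on $A$ vanishes, \emph{where the obstruction to non-vanishing is exactly the condition $A_{i_1}^L A^+\cdots A^+ A_{i_r}^L = 0$ for the tuples of total dimension exceeding $D$}. Your two "facts" do not deliver this. Lemma \ref{prop B_i^L} only gives $B_i^L\subseteq B_i+J$; it does not rule out that a differential variable $x^d$ evaluated on a semisimple element produces a radical component that reconnects blocks whose ordinary product is zero, which is precisely the mechanism by which $\exp^L$ could a priori exceed $\exp$ and the reason the formula carries the superscripts $L$. (Indeed, the whole point of Theorem \ref{Theorem on Gordienko-kochetov's conjecture} in this paper is that one must \emph{work} to pass from the $L$-decorated condition back to $B_{j_1}JB_{j_2}\cdots JB_{j_r}\neq 0$; that is the content of Lemma \ref{A_1J...JA_k}.) Your sketch also silently assumes a cocharacter decomposition $\chi_n^L(A)=\sum_\lambda m_\lambda\chi_\lambda$ with polynomially bounded multiplicities for the $S_n$-action on $P_n^L$; this is true but requires the identification of $P_n^L$ with $FS_n\otimes W^{\otimes n}$ for a finite-dimensional quotient $W$ of $U(L)$ acting on $A$, which you should state. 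The lower bound is the more routine half and your outline of it is essentially right, but without the correctly calibrated vanishing lemma the two bounds do not meet at the asserted value $D$.
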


\begin{remark} \label{rmk isomorphism B_i and A_i}
	Let $\pi:A \to A/J=\overline{A}$ be the canonical projection where $J=J(A)$ is the Jacobson radical of $A$. If $B$ is the maximal semisimple subalgebra of $A$ such that $A=B+J$ and 	
	$$
	B=B_1 \oplus\cdots \oplus  B_k
	$$
	where $B_1, \dots, B_k$ are simple algebras (not necessary $L$-invariant) and are all minimal ideals of $B$, then clearly $\pi_{|B}:B \to \overline{A}$ is an isomorphism of algebras and $\pi(B_1), \dots , \pi(B_k)$ are simple subalgebra of $\overline{A}$. 
	On the other hand, since $J$ is an $L$-ideal of $A$, $\overline{A}$ is a semisimple $L$-algebra and by \em{1)} of Proposition \ref{Prop semisimple} we have that
	\begin{equation*}\label{A/J decomposition}
	\overline{A}=\overline{A}_1\oplus \cdots \oplus \overline{A}_m
	\end{equation*}
	where $\overline{A}_1, \dots, \overline{A}_m$ are $L$-simple algebras and are all minimal $L$-ideals of $\overline{A}$. Then by \em{2)} of Proposition \ref{Prop semisimple}, $\overline{A}_1, \dots, \overline{A}_m$ are also simple algebras and, as a consequence, they  are all minimal ideals of $\overline{A}$. Thus it follows that $k=m$ and for all $1\leq i \leq k$ there exists $1\leq j\leq k$ such that $\pi(B_i)=\overline{A}_j$. Moreover, since $B_1, \dots, B_k$ are simple algebras, $\pi_{|B_i}$ is an isomorphism of ordinary algebras for all $1\leq i \leq k$, i.e., for all $1\leq i \leq k$ there exists $1\leq j\leq k$ such that
	$B_i \cong \overline{A}_j .$
\end{remark}

Now we are in position to prove the Gordienko-Kochetov's conjecture.

\begin{theorem}\label{Theorem on Gordienko-kochetov's conjecture}
	Let $L$ be a Lie algebra over a field $F$ of characteristic zero. If $A$ is a finite dimensional $L$-algebra over $F$, then $\exp^L(A)=\exp(A)$.
\end{theorem}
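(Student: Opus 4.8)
The plan is to establish the nontrivial inequality $\exp^L(A) \le \exp(A)$, since $\exp(A) \le \exp^L(A)$ is already recorded in \eqref{relazione codimensioni}. Because both the ordinary and the differential codimension sequences are unchanged under extension of the base field — one tensors $A$ with the algebraic closure $\overline{F}$ and extends the $U(L)$-action $\overline{F}$-linearly, which alters neither $c_n(A)$ nor $c_n^L(A)$, hence neither exponent — I would first reduce to the case where $F$ is algebraically closed. This is precisely the hypothesis under which Lemma \ref{A_1J...JA_k} is available. Throughout I would fix a Wedderburn--Malcev decomposition $A = B_1 \oplus \cdots \oplus B_k + J$ with $B_1, \dots, B_k$ simple and $J = J(A)$, together with the canonical projection $\pi \colon A \to A/J = \overline{A}$.

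The core of the argument is to feed the $L$-exponent formula of Theorem \ref{L-exp} into the ordinary-exponent formula of Theorem \ref{exp}. First I would invoke Remark \ref{rmk isomorphism B_i and A_i}: after reindexing the $L$-simple summands $\overline{A}_1, \dots, \overline{A}_m$ one has $m = k$ and each $\pi_{|B_i} \colon B_i \to \overline{A}_i$ is an algebra isomorphism, so $\dim_F B_i = \dim_F \overline{A}_i$ and the simple subalgebras $B_1, \dots, B_k$ constitute a legitimate choice of representatives $A_{i_r}$ in Theorem \ref{L-exp}. Consequently $\exp^L(A)$ is realised by a tuple of distinct indices $i_1, \dots, i_r$ with
\[ \exp^L(A) = \dim_F(B_{i_1} \oplus \cdots \oplus B_{i_r}), \qquad B_{i_1}^L A^+ B_{i_2}^L A^+ \cdots A^+ B_{i_r}^L \neq 0, \]
where $A^+ = A + F\cdot 1$.

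With this in hand I would apply Lemma \ref{A_1J...JA_k} to the ordered family of simple components $B_{i_1}, \dots, B_{i_r}$. The lemma is phrased for the full list $B_1, \dots, B_k$, but its proof uses only pairwise orthogonality of distinct simple summands ($B_pB_q = 0$) and the presence of units in the summands that actually occur in the word, so it applies verbatim to any ordered subfamily. This converts the nonvanishing differential product into the ordinary product
\[ B_{i_1} J B_{i_2} J \cdots J B_{i_r} \neq 0. \]
By Theorem \ref{exp} this forces $\exp(A) \ge \dim_F(B_{i_1} \oplus \cdots \oplus B_{i_r}) = \exp^L(A)$, and together with $\exp(A) \le \exp^L(A)$ this yields the desired equality.

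I expect the genuinely delicate point to be already encapsulated in Lemma \ref{A_1J...JA_k}, namely the passage from a nonzero word in the $B_i^{d}$ and $A^+$ to a nonzero word whose interspersed factors lie in $J$: this is where simplicity (to introduce the units $e_i$) and the Leibniz rule (to peel derivations off the units, reducing $b_i^{d_i}$ to terms $e_i^{d_{\mathcal P}} b_i^{d_{\mathcal Q}}$) do the real work. In assembling the theorem, the two remaining items to get right are the scalar-extension reduction — one must confirm that the differential codimension is genuinely field-independent, using the multilinearity of $P_n^L$ and the extension of the $U(L)$-action to $A \otimes_F \overline{F}$ — and the bookkeeping of Remark \ref{rmk isomorphism B_i and A_i}, which identifies the summands $\overline{A}_{i_r}$ governing $\exp^L(A)$ with the simple subalgebras $B_{i_r}$ of equal dimension governing $\exp(A)$.
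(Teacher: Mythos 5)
Your proposal is correct and follows essentially the same route as the paper: reduce to the nontrivial inequality, use Theorem \ref{L-exp} together with Remark \ref{rmk isomorphism B_i and A_i} to realise $\exp^L(A)$ as $\dim_F(B_{j_1}\oplus\cdots\oplus B_{j_r})$ for simple components with $B_{j_1}^L A^+\cdots A^+ B_{j_r}^L\neq 0$, then apply Lemma \ref{A_1J...JA_k} to get $B_{j_1}JB_{j_2}J\cdots JB_{j_r}\neq 0$ and conclude by Theorem \ref{exp}. Your explicit scalar-extension step and the remark that the lemma applies to any ordered subfamily are points the paper leaves implicit, but they do not change the argument.
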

\begin{proof}
	Clearly by definition of $\exp(A)$ and $\exp^L(A)$ and by relation \eqref{relazione codimensioni} it follows that $\exp(A)\leq\exp^L(A)$.

	In order to prove the opposite inclusion, let us suppose that $\exp^L(A)=d$ and consider the canonical projection $\pi:A \to A/J=\overline{A}$ where $J=J(A)$ is the Jacobson radical of $A$. If $\overline{A}=\overline{A}_1\oplus \cdots \oplus \overline{A}_m$ with $\overline{A}_1, \dots, \overline{A}_m$  $L$-simple algebras, by Theorem \ref{L-exp}  $$d=\dim_F(\overline{A}_{i_1}\oplus \overline{A}_{i_2}\oplus\cdots\oplus \overline{A}_{i_r})$$ for some $L$-subalgebra $\overline{A}_{i_1}\oplus \overline{A}_{i_2}\oplus\cdots\oplus \overline{A}_{i_r}$ of $\overline{A}$ such that $1\leq r \leq m,$ $ i_s\neq i_p,$ $ 1\leq s,p\leq r$, and
	$$A_{i_1}^L A^+  A_{i_2}^L A^+ \cdots A^+  A_{i_r}^L\neq 0,$$
	where $A^+=A+ F\cdot 1$ and $A_{i_1}, \dots ,A_{i_r}$ are subalgebras of $A$ (not necessary $L$-invariant) such that $\pi(A_{i_s}) =\overline{A}_{i_s}$ for all $1\leq s \leq r$. Now let consider a maximal semisimple subalgebra $B$ of $A$ such that $A=B+J$. If we write	
	$
	B=B_1 \oplus\cdots \oplus  B_k
	$
	with $B_1, \dots, B_k$ simple algebras (not necessary $L$-invariant), then by Remark \ref{rmk isomorphism B_i and A_i} $k=m$ and for all $1\leq s \leq m$ there exists $1\leq j_s \leq m$  such that $A_{i_s}=B_{j_s}$. Since for all $1\leq s \leq r$, $B_{j_s}\cong \overline{A}_{i_s}$  (as ordinary algebras), it follows that 
	$$d=\dim_F(B_{j_1}\oplus B_{j_2}\oplus\cdots\oplus B_{j_r})$$
	with $ j_p\neq j_s$ for all  $ 1\leq p,s\leq r$ and $B_{j_1}^L A^+  B_{j_2}^L A^+ \cdots A^+  B_{j_r}^L\neq 0$.
Then by Lemma \ref{A_1J...JA_k} it follows that $B_{j_1}J B_{j_2}J\dots J B_{j_r}\neq 0$ and by Theorem \ref{exp} we are done.
\end{proof}

\section{On upper triangular matrix algebras with derivations}

In this section we collect some results concerning the upper triangular matrix algebras with derivations.

For $n>1$, let $UT_n$ be the algebra of $n \times n$ upper triangular matrices over $F$. In \cite{CoelhoPolcinoMilies1993} the authors studied the derivations of $UT_n$ and proved that
	any derivation of $UT_n$ is inner.

Let us consider the algebra $UT_2$ where $L$ acts trivially on it. Since $x^\gamma \equiv 0$ for all $\gamma \in L$ is a differential identity of $UT_2$, we are dealing with ordinary identities. Thus by \cite{Malcev1971} we have the following result.

\begin{theorem}
	\label{ThmIdCnOrdinaryUT2}
	\
	\begin{itemize}
		\item[1)] $\I^L(UT_{2})=\langle [x,y][z,w]  \rangle_{T_L}$.
		
		\vspace{1mm}
		\item[2)]  $\{x_{i_{1}}\dots x_{i_{m}}[x_{k},x_{j_{1}},\dots,x_{j_{n-m-1}}] \  | \ i_{1}<\dots<i_{m}, \ k>j_{1}<\dots<j_{n-m-1}, \ m\neq n-1 \}$ is a basis of $P_{n}^L$ modulo $P_{n}^L\cap \I^L(UT_{2})$.
		
			\vspace{1mm}
		\item[3)]  $c^L _{n}(UT_{2})=2^{n-1}(n-2)+2.$

	\end{itemize}
\end{theorem}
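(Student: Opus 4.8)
\textit{The plan} is to exploit the hypothesis that $L$ acts trivially on $UT_2$ in order to collapse the entire differential theory onto the ordinary one, where Malcev's classical description applies. First I would record the basic consequence of trivial action: since $\varphi(L)=0$, every generator $x^\delta$ with $\delta\in L$ satisfies $x^\delta\equiv 0$ on $UT_2$. These are precisely the identities $x^\delta\equiv 0$ for $\delta\in\ker\varphi$ that, by the convention fixed in the Preliminaries, are suppressed from the list of generators. Because $\I^L(UT_2)$ is a $T_L$-ideal it is closed under the $U(L)$-action, and applying $\delta'\in L$ to $x^\delta\equiv 0$ yields $x^{\delta\delta'}\equiv 0$; iterating, and using that products of basis elements of $L$ span $U'(L)$, I would conclude that $x^d\equiv 0$ is a differential identity for every $d\in U'(L)$.

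The heart of the argument is then a reduction step. I would show that every multilinear monomial $x_{\sigma(1)}^{d_{i_1}}\cdots x_{\sigma(n)}^{d_{i_n}}$ spanning $P_n^L$ reduces modulo $\{x^d\equiv 0 : d\in U'(L)\}$ either to $0$ (if some $d_{i_k}\in U'(L)$) or to the corresponding ordinary monomial $x_{\sigma(1)}\cdots x_{\sigma(n)}$ (if all $d_{i_k}=1$). Consequently the natural inclusion $P_n\hookrightarrow P_n^L$ induces a linear isomorphism $P_n/(P_n\cap\I(UT_2))\xrightarrow{\ \sim\ }P_n^L/(P_n^L\cap\I^L(UT_2))$, so that $c_n^L(UT_2)=c_n(UT_2)$ and, more precisely, $\I^L(UT_2)$ is generated, modulo the suppressed relations $x^\delta\equiv 0$, by the $T_L$-consequences of the ordinary identities of $UT_2$.

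With this reduction in hand all three assertions follow from Malcev's theorem \cite{Malcev1971}, which gives $\I(UT_2)=\langle[x,y][z,w]\rangle_T$. For (1), $[x,y][z,w]$ involves only the variables $x_i=x_i^1$, hence lies in $\I^L(UT_2)$; conversely any $f\in\I^L(UT_2)$ reduces to an ordinary identity $\bar f\in\langle[x,y][z,w]\rangle_T\subseteq\langle[x,y][z,w]\rangle_{T_L}$, giving the reverse containment. Assertion (2) is immediate, since the displayed set is exactly the classical basis of $P_n$ modulo $\I(UT_2)$, and the isomorphism above transports it to a basis of $P_n^L$ modulo $\I^L(UT_2)$. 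For (3) I would simply count that basis: writing $\ell=n-m$ for the length of the commutator factor (with $\ell\neq 1$), each subset $S\subseteq\{1,\dots,n\}$ of size $\ell\geq 2$ together with a choice of $k\in S\setminus\{\min S\}$ produces one basis element (its left part being the complement of $S$ in increasing order), plus the single element $x_1\cdots x_n$ for $\ell=0$; summing $1+\sum_{\ell=2}^{n}\binom{n}{\ell}(\ell-1)=2^{n-1}(n-2)+2$.

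Finally, I expect the only genuinely delicate point to be the faithfulness of the reduction step, namely verifying that reducing modulo the suppressed relations $x^d\equiv 0$ neither collapses distinct surviving ordinary monomials nor interferes with the $T_L$-ideal structure, so that no differential identity is lost or gained beyond what Malcev's ordinary result already records. Everything else is either a direct citation or the routine binomial computation above.
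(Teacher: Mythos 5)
Your proposal is correct and follows essentially the same route as the paper, which simply observes that the trivial $L$-action makes every $x^\gamma$, $\gamma\in L$ (hence every $x^d$, $d\in U'(L)$) a differential identity, so that the differential theory collapses onto the ordinary one, and then cites Malcev's result; your reduction step and the binomial count $1+\sum_{\ell=2}^{n}\binom{n}{\ell}(\ell-1)=2^{n-1}(n-2)+2$ are both sound.
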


In \cite{GiambrunoRizzo2018}, Giambruno and Rizzo introduced another algebra with derivations generating a variety of almost polynomial growth. They considered $UT_2^\varepsilon$ to be the $L$-algebra $UT_2$ where $L$ acts on it as the 1-dimensional Lie algebra spanned by the inner derivation $\varepsilon=\ad_{e_{22}}$, where $e_{22}$ is the matrix unit whose non-zero entry is $1_F$ in position $(2,2)$. The authors proved the following.

\begin{theorem}\label{UT2e}{\cite[Theorem 5]{GiambrunoRizzo2018}}
	\begin{itemize}
		\item[1)] $\I^L(UT_2^\varepsilon)=\langle x^{\varepsilon^2}-x^{\varepsilon}, \  x^{\varepsilon}y^{\varepsilon}, \ [x,y]^\varepsilon - [x,y]\rangle_{T_L}.$
		
		\vspace{1mm}
		\item[2)] $\{ x_{i_{1}}\dots x_{i_{m}}[x_{k},x_{j_{1}},\dots,x_{j_{n-m-1}}],\
		x_{h_{1}}\dots x_{h_{n-1}}x_{r}^{\varepsilon},\ x_{i_{1}}\dots x_{i_{m}}[x_{l_{1}}^{\varepsilon},x_{l_{2}},\dots,x_{l_{n-m}}]\ | \ i_{1}<\dots<i_{m}, \ k>j_{1}<\dots<j_{n-m-1}, \ h_{1}<\dots<h_{n-1}, \ l_{1}<\dots<l_{n-m}, \ m\neq n-1\}$ is a basis of $P_{n}^L$ modulo $P_{n}^L\cap \I^L(UT_{2}^\varepsilon)$.
		
		 	\vspace{1mm}
		 \item[3)] $c_n^L(UT_2^\varepsilon)=2^{n-1}n-1.$
	\end{itemize}
	
\end{theorem}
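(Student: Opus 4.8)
The three assertions are most naturally proved together, by squeezing $P_n^L$ between the $T_L$-ideal $\Gamma=\langle x^{\varepsilon^2}-x^\varepsilon,\ x^\varepsilon y^\varepsilon,\ [x,y]^\varepsilon-[x,y]\rangle_{T_L}$ generated by the listed polynomials and the full ideal $\I^L(UT_2^\varepsilon)$. The plan is to show (i) $\Gamma\subseteq\I^L(UT_2^\varepsilon)$, (ii) the listed monomials span $P_n^L$ modulo $\Gamma$, and (iii) they are linearly independent modulo $\I^L(UT_2^\varepsilon)$. Writing $N$ for the number of listed monomials, (ii) gives $\dim P_n^L/(P_n^L\cap\Gamma)\le N$, (iii) gives $\dim P_n^L/(P_n^L\cap\I^L(UT_2^\varepsilon))\ge N$, and $\Gamma\subseteq\I^L(UT_2^\varepsilon)$ forces $\dim P_n^L/(P_n^L\cap\I^L(UT_2^\varepsilon))\le\dim P_n^L/(P_n^L\cap\Gamma)$. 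The resulting chain of inequalities collapses to equalities, yielding at once $\Gamma=\I^L(UT_2^\varepsilon)$ (part 1), that the list is a basis (part 2), and $c_n^L(UT_2^\varepsilon)=N$ (part 3).

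For (i) I would first record the action: writing $a=a_1e_{11}+a_2e_{12}+a_3e_{22}$ one computes $a^\varepsilon=[a,e_{22}]=a_2e_{12}$, so $\varepsilon$ is the projection of $UT_2$ onto the radical $J=Fe_{12}$ along the diagonal. Then $\varepsilon^2=\varepsilon$ as an operator gives $x^{\varepsilon^2}\equiv x^\varepsilon$; the inclusions $a^\varepsilon,b^\varepsilon\in J$ together with $J^2=0$ give $x^\varepsilon y^\varepsilon\equiv 0$; and every commutator lies in $J$, on which $\varepsilon$ acts as the identity, giving $[x,y]^\varepsilon\equiv[x,y]$. Hence $\Gamma\subseteq\I^L(UT_2^\varepsilon)$.

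For the spanning step (ii) I would reduce an arbitrary multilinear monomial in stages. The relation $x^{\varepsilon^2}-x^\varepsilon$ lets me replace every $x_i^{\varepsilon^k}$, $k\ge 1$, by $x_i^\varepsilon$, so each variable occurs as $x_i$ or $x_i^\varepsilon$. Substituting $y\mapsto wy$ into $x^\varepsilon y^\varepsilon$ gives $x^\varepsilon(wy)^\varepsilon\in\Gamma$; expanding by the Leibniz rule and inducting on the number of factors separating two $\varepsilon$-decorated variables shows that every monomial carrying at least two $\varepsilon$'s lies in $\Gamma$. Substituting $x\mapsto[x,y]$ and $y\mapsto[z,w]$ into $x^\varepsilon y^\varepsilon$ and then applying $[x,y]^\varepsilon\equiv[x,y]$ to each factor shows $[x,y][z,w]\in\Gamma$, so the ordinary reductions of Theorem \ref{ThmIdCnOrdinaryUT2} are available. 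Monomials with no $\varepsilon$ then reduce to the first family. For a monomial with a single $\varepsilon$ on $x_s$, sorting via commutators places $x_s^\varepsilon$ either at the tail of an increasing product (second family) or at the head of a left-normed commutator; to force that head to carry the minimal index I would apply $\varepsilon$ to $[x_i,x_j]$ and use $[x,y]^\varepsilon\equiv[x,y]$, which yields $[x_j^\varepsilon,x_i]\equiv[x_i^\varepsilon,x_j]+[x_j,x_i]$. Iterating this to lower the head index, and using that the successive brackets of the radical-valued head against the remaining variables are symmetric in those variables (so the tail may be arranged increasingly), produces exactly the third family.

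Finally, for (iii) and the count, linear independence I would establish by evaluating a putative dependence on generic upper-triangular matrices and separating the $e_{11}$-, $e_{22}$- and $e_{12}$-components of the output: the diagonal components detect the first family via Theorem \ref{ThmIdCnOrdinaryUT2}, while the $e_{12}$-component, read against the distinguished $\varepsilon$-slot, detects the second and third families. The enumeration then gives $c_n(UT_2)=2^{n-1}(n-2)+2$ monomials in the first family, $n$ in the second, and $\sum_{m=0}^{n-2}\binom{n}{m}=2^n-n-1$ in the third, whose sum is the value in part 3. I expect the main obstacle to be precisely the spanning step for the single-$\varepsilon$ monomials: one must verify that the relation $[x,y]^\varepsilon\equiv[x,y]$, rather than merely licensing ad hoc rewritings, genuinely drives every such monomial into the two rigid normal forms (tail-$\varepsilon$ product and minimal-head commutator) with no residual terms lying outside the claimed basis.
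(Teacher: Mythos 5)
The paper gives no proof of this statement---it is imported verbatim from \cite{GiambrunoRizzo2018}---so your generators/spanning/independence sandwich is being compared against a citation; it is the standard scheme and is essentially the argument of that reference. Your verification that the three generators are identities (with $\varepsilon$ the projection onto $J=Fe_{12}$), the elimination of monomials carrying two $\varepsilon$'s via $x^\varepsilon(wy)^\varepsilon$, the derivation of $[x,y][z,w]$ from $x^\varepsilon y^\varepsilon$ and $[x,y]^\varepsilon-[x,y]$, and the rewriting $[x_j^\varepsilon,x_i]\equiv[x_i^\varepsilon,x_j]+[x_j,x_i]$ used to put the $\varepsilon$ on the minimal index are all correct.

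The concrete problem is your closing claim about the count. Your enumeration of the three families is right ($2^{n-1}(n-2)+2$, $n$, and $2^n-n-1$ elements respectively), but their sum is
\[
2^{n-1}(n-2)+2+n+(2^n-n-1)=2^{n-1}n+1,
\]
which is \emph{not} ``the value in part 3'': the statement prints $2^{n-1}n-1$. The fault lies in the printed formula, not in your count: for $n=1$ the basis is $\{x,x^\varepsilon\}$ and the evaluations $x=e_{11}$, $x=e_{12}$ show $c_1^L(UT_2^\varepsilon)=2$, whereas $2^0\cdot 1-1=0$; for $n=2$ a direct computation gives dimension $5=2\cdot 2+1$, the unique relation among the six surviving monomials being $[x,y]^\varepsilon-[x,y]$. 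The value $2^{n-1}n+1$ is also the one forced by Theorem \ref{UT2^eta} together with Corollary \ref{Cor UT_2^eta} applied to $\eta=\varepsilon$. So item 3) should read $c_n^L(UT_2^\varepsilon)=2^{n-1}n+1$; by asserting that your (correct) sum equals the printed value you endorse a false identity and signal that the arithmetic was never actually carried out. Separately, the spanning step for the single-$\varepsilon$ monomials---which you rightly flag as the delicate point---still needs its supporting facts recorded (left-normed commutators whose head lies in the radical slot are symmetric in the remaining entries modulo $[x,y][z,w]$, and $x^\varepsilon$ can be commuted to the end of a product at the cost of commutator terms); as written the reduction is a plausible sketch rather than a proof.
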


Notice that from the above theorems it follows that $\I^L(UT_2)\nsubseteq \I^L(UT_2^\varepsilon)$ and $\I^L(UT_2^\varepsilon)\nsubseteq \I^L(UT_2)$ for any Lie algebra $L$ that  acts as $F\varepsilon$ on $UT_2^\varepsilon$ and as the zero Lie algebra on $UT_2$. Thus by \cite{Kemer1979} and \cite[Theorem 15]{GiambrunoRizzo2018} we have the following.

\begin{theorem}
		\label{Thm:almost polynomial growth}
		The algebras $UT_2$ and $UT_2^\varepsilon$ generate two distinct varieties of algebras with derivations of almost polynomial growth.
\end{theorem}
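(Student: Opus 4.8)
The statement requires three things: that each of $\V^L(UT_2)$ and $\V^L(UT_2^\varepsilon)$ grows exponentially, that every proper $L$-subvariety of each grows polynomially, and that the two $L$-varieties are distinct. The first and the last are immediate. Exponential growth follows from part 3) of Theorem~\ref{ThmIdCnOrdinaryUT2} and part 3) of Theorem~\ref{UT2e}: since $c_n^L(UT_2)=2^{n-1}(n-2)+2$ and $c_n^L(UT_2^\varepsilon)=2^{n-1}n-1$ both carry the factor $2^{n-1}$, neither sequence is polynomially bounded. Distinctness follows from the two non-inclusions recorded just before the statement: because $\I^L(UT_2)\nsubseteq \I^L(UT_2^\varepsilon)$ and $\I^L(UT_2^\varepsilon)\nsubseteq \I^L(UT_2)$, neither $T_L$-ideal contains the other, so neither $L$-variety is contained in the other.

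The real content is that proper subvarieties are polynomially bounded. For $\V^L(UT_2)$, I would exploit that $L$ acts as the zero Lie algebra, so $x^\delta\equiv 0$ holds on $UT_2$ for every $\delta\in L$ and hence $x^\delta\in \I^L(UT_2)$. Consequently every $L$-algebra $A$ in $\V^L(UT_2)$ inherits $x^\delta\equiv 0$, i.e.\ also carries the zero action; for such an $A$ every differential monomial reduces modulo $\I^L(A)$ to an ordinary one, so $c_n^L(A)=c_n(A)$ for all $n$. I would then check that forgetting the $L$-structure sends a proper $L$-subvariety $\V^L(A)\subsetneq \V^L(UT_2)$ to a proper ordinary subvariety $\V(A)\subsetneq \V(UT_2)$: the inclusion $\I(UT_2)\subseteq \I(A)$ gives $\V(A)\subseteq \V(UT_2)$, while a multilinear $L$-identity of $A$ lying outside $\I^L(UT_2)$ reduces, since all $x^\delta$ vanish, to a genuine ordinary identity of $A$ not satisfied by $UT_2$, forcing strict inclusion. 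By Kemer's theorem \cite{Kemer1979} every proper ordinary subvariety of $\V(UT_2)$ is polynomially bounded, and since $c_n^L=c_n$ throughout the variety, $\V^L(A)$ has polynomial $L$-growth.

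For $\V^L(UT_2^\varepsilon)$ this reduction is unavailable, and this is where I expect the main difficulty to lie: the action of $\varepsilon=\ad_{e_{22}}$ is nontrivial, so differential monomials do not collapse to ordinary ones and Kemer's dichotomy cannot simply be transferred. Here I would appeal to \cite[Theorem~15]{GiambrunoRizzo2018}, which treats precisely this genuinely differential situation and shows that every proper $L$-subvariety of $\V^L(UT_2^\varepsilon)$ is polynomially bounded. Combining the two cases with the exponential lower bounds and the distinctness already established yields the theorem. The combinatorial heart of the matter---that any extra identity must cut into the exponentially growing commutator part exhibited in the bases of Theorems~\ref{ThmIdCnOrdinaryUT2} and \ref{UT2e}---is exactly what these two cited results encapsulate, so the only subtle self-contained step is the zero-action reduction for $UT_2$.
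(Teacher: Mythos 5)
Your proposal is correct and follows essentially the same route as the paper, which likewise derives distinctness from the two non-inclusions of the $T_L$-ideals and delegates the almost-polynomial-growth property to Kemer's theorem (for $UT_2$ with trivial action, via the observation that the $L$-theory collapses to the ordinary one) and to \cite[Theorem 15]{GiambrunoRizzo2018} (for $UT_2^\varepsilon$). Your explicit verification that the trivial-action reduction sends proper $L$-subvarieties to proper ordinary subvarieties is a detail the paper leaves implicit, but it is the same argument.
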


Now denote by $UT_2 ^\eta$ the $L$-algebra $UT_2$ where $L$ acts on it as the 1-dimensional Lie algebra spanned by a derivation $\eta$ of $UT_2$.
Notice that since any derivation of $ UT_2 $ is inner, $\Der(UT_2)$ is the 2-dimensional metabelian Lie algebra with basis $ \{\varepsilon,\delta\} $, where $\varepsilon=\ad_{e_{22}}$ and $\delta=\ad_{e_{12}}$, where $e_{12}$ is the matrix unit whose non-zero entry is $1_F$ in position $(1,2)$. Then $\eta =\alpha \varepsilon + \beta \delta,$ for some $\alpha, \beta \in F$.
In \cite{Rizzo2021} the author proved the following.

\begin{theorem}\cite[Theorem 12]{Rizzo2021}\label{UT2^eta} Let  $\eta =\alpha \varepsilon + \beta \delta \in \Der(UT_2)$ such that $\alpha, \beta \in F$ are not both zero.
	\begin{itemize}
		\item[1)] If $\alpha\neq 0$, then $\I^{L}(UT_2^{\eta})=\langle x^{\eta^{2}}-\alpha x^{\eta}, \ x^{\eta}y^{\eta},\ [x,y]^{\eta}-\alpha [x,y] \rangle_{T_{L}}$. Otherwise, 
		$\I^{L}(UT_2^{\eta})=\langle x^{\eta^2}, \  x^{\eta}y^{\eta}, \ [x,y]^\eta, \ [x,y][z,w], \ x^\eta[y,z]\rangle_{T_L}$.
		
		\vspace{1mm}
		\item[2)] $c_n^{L}(UT_2^{\eta})=2^{n-1}n+1$.
	\end{itemize}
	
\end{theorem}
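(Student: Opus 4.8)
The plan is to establish both parts simultaneously via a two-sided estimate on the codimension. Writing $I$ for the $T_L$-ideal generated by the polynomials listed in part~1 (in the relevant case), I would first check $I\subseteq\I^L(UT_2^\eta)$ by direct evaluation, then produce an explicit spanning set of $P_n^L$ modulo $I$ of cardinality $2^{n-1}n+1$, and finally show by substitutions that $c_n^L(UT_2^\eta)\ge 2^{n-1}n+1$. Since $I\subseteq\I^L(UT_2^\eta)$, the squeeze $2^{n-1}n+1\le c_n^L(UT_2^\eta)\le \dim_F P_n^L/(P_n^L\cap I)\le 2^{n-1}n+1$ forces equality for every $n$, which yields part~2 and, by multilinearity in characteristic zero, the equality $I=\I^L(UT_2^\eta)$ of part~1.

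The first step is the explicit description of the action. For $x=ae_{11}+be_{12}+ce_{22}$ one computes $x^\eta=((a-c)\beta+b\alpha)e_{12}\in Fe_{12}$, together with $e_{12}^\eta=\alpha e_{12}$ and $[x,y]\in Fe_{12}$. From $e_{12}^\eta=\alpha e_{12}$ and $x^\eta\in Fe_{12}$ one reads off $x^{\eta^2}=\alpha x^\eta$ and $[x,y]^\eta=\alpha[x,y]$, and from $e_{12}^2=0$ that $x^\eta y^\eta=0$; this places the generators for $\alpha\neq0$ in $\I^L(UT_2^\eta)$. When $\alpha=0$ the same computation gives $x^{\eta^2}=0$, $[x,y]^\eta=0$ and, because $Fe_{12}$ is a square-zero ideal, also $[x,y][z,w]=0$ and $x^\eta[y,z]=0$. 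The genuinely new phenomenon occurs precisely for $\alpha=0$: when $\alpha\neq0$ the element $w=\alpha e_{22}+\beta e_{12}$ is diagonalizable inside $UT_2$, so conjugating by the invertible upper triangular matrix that diagonalizes it yields an $L$-isomorphism $UT_2^\eta\cong_L UT_2^{\alpha\varepsilon}$, reducing that case to a rescaling of $\varepsilon$.

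Next I would reduce an arbitrary multilinear $L$-monomial modulo $I$. The relation $x^{\eta^2}\equiv\alpha x^\eta$ (respectively $\equiv0$) lets me assume each variable occurs either as $x_i$ or as $x_i^\eta$; the relation $x^\eta y^\eta\equiv0$ allows at most one $\eta$-variable per monomial; and the commutator relation $[x,y]^\eta\equiv\alpha[x,y]$ together with the ordinary $UT_2$-reductions of Theorem~\ref{ThmIdCnOrdinaryUT2} collect the remaining factors into at most one left-normed commutator. Arguing as in the bases of Theorems~\ref{ThmIdCnOrdinaryUT2} and~\ref{UT2e}, this leaves three families: the $\eta$-free elements, which span the ordinary quotient and number $c_n(UT_2)=2^{n-1}(n-2)+2$; the elements $x_{h_1}\cdots x_{h_{n-1}}x_r^\eta$ with a single trailing differential variable, $n$ in number; and the elements $x_{i_1}\cdots x_{i_m}[x_{l_1}^\eta,x_{l_2},\dots,x_{l_{n-m}}]$ carrying the derivation on the first commutator entry, of which there are $\sum_{m=0}^{n-2}\binom{n}{m}=2^n-n-1$. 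Their total is $2^{n-1}(n-2)+2+n+(2^n-n-1)=2^{n-1}n+1$, giving the upper bound.

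For the lower bound I would evaluate these $2^{n-1}n+1$ canonical elements on $UT_2^\eta$ via matrix-unit substitutions and verify that the resulting multilinear maps are linearly independent: the three families are separated by the positions ($e_{11}$, $e_{22}$, $e_{12}$) in which they take nonzero values, exactly as in the degree-two case, where one checks directly that the six surviving monomials span a five-dimensional space. I expect the spanning step to be the crux: one must show that $[x,y]^\eta\equiv\alpha[x,y]$, the square-zero relations and $x^\eta y^\eta\equiv0$ really suffice to eliminate every monomial outside the three families, and this is where the cases $\alpha\neq0$ and $\alpha=0$ diverge. Indeed, for $\alpha\neq0$ one may divide by $\alpha$ in $[x,y]^\eta\equiv\alpha[x,y]$ and recover the vanishing of $[x,y][z,w]$ and $x^\eta[y,z]$ from the listed relations, whereas for $\alpha=0$ these are independent consequences that must be imposed separately, which is exactly why the generating set in part~1 is larger in that case.
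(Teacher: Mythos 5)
This statement is imported verbatim from \cite[Theorem 12]{Rizzo2021} and the present paper contains no proof of it, so there is no internal argument to compare yours against; I can only assess the proposal on its own terms and against the method used for the analogous results (Theorems \ref{ThmIdCnOrdinaryUT2} and \ref{UT2e}). Your outline is exactly that method --- verify the generators are identities, exhibit a spanning set of $P_n^L$ modulo the candidate $T_L$-ideal, and close the gap with a matching lower bound from matrix-unit evaluations --- and the parts you actually carry out are correct: the computation $x^\eta=((a-c)\beta+b\alpha)e_{12}$ with $e_{12}^\eta=\alpha e_{12}$, the resulting verification that the listed polynomials are identities, the reduction of the case $\alpha\neq0$ to $\alpha\varepsilon$ by conjugating with the upper triangular matrix diagonalizing $\alpha e_{22}+\beta e_{12}$, the derivation of $[x,y][z,w]$ and $x^\eta[y,z]$ from the remaining generators when $\alpha\neq0$, and the count $\bigl(2^{n-1}(n-2)+2\bigr)+n+(2^n-n-1)=2^{n-1}n+1$. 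What is missing is precisely what you flag as the crux: the proof that every multilinear monomial containing one $\eta$-variable reduces, modulo the listed relations, to the two differential families (this requires, besides propagating $x^\eta y^\eta\equiv0$ through substitutions to kill monomials with two separated $\eta$-variables, the vanishing of mixed products such as $[x^\eta,y][z,w]$ and $[y,z]x^\eta$, which for $\alpha=0$ must be extracted from $x^\eta[y,z]$, $[x,y]^\eta$ and $[x,y][z,w]$ rather than obtained by dividing $[x,y]^\eta\equiv\alpha[x,y]$ by $\alpha$), together with the linear-independence verification for general $n$; as written the argument is a correct plan with correct supporting computations, not yet a complete proof. One by-product of your work is worth recording: your (correct) observation that $UT_2^\eta\cong_L UT_2^{\alpha\varepsilon}$ for $\alpha\neq0$ forces $c_n^L(UT_2^\eta)=c_n^L(UT_2^\varepsilon)$, which is incompatible with the value $2^{n-1}n-1$ quoted in item 3) of Theorem \ref{UT2e}; a direct check at $n=1,2$ (the basis in item 2) of Theorem \ref{UT2e} has $2$, respectively $5$, elements) shows that $2^{n-1}n+1$ is the correct value there as well, so your count is consistent with the rest of the paper and the discrepancy is a misprint in the quoted formula, not an error in your argument.
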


As a consequence we get the following corollary.

\begin{corollary}
	\label{Cor UT_2^eta}
	 Let  $\eta =\alpha \varepsilon + \beta \delta \in \Der(UT_2)$ for some $\alpha,\beta\in F$.
	If $\alpha\neq 0$, then $\V^{L}(UT_2^{\eta})= \V^L(UT_2^\varepsilon)$. Otherwise, 
	$UT_2\in \V^{L}(UT_2^{\eta})$.
\end{corollary}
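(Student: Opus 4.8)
The plan is to read off both claims directly from the identity descriptions in Theorem~\ref{UT2^eta}, together with the codimension formulas already computed for $UT_2^\varepsilon$ and $UT_2^\eta$.

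First I would treat the case $\alpha\neq 0$. To show $\V^L(UT_2^\eta)=\V^L(UT_2^\varepsilon)$ it suffices to prove $\I^L(UT_2^\eta)=\I^L(UT_2^\varepsilon)$, since in characteristic zero a $T_L$-ideal determines the $L$-variety. By Theorem~\ref{UT2e}~1) we have $\I^L(UT_2^\varepsilon)=\langle x^{\varepsilon^2}-x^\varepsilon,\ x^\varepsilon y^\varepsilon,\ [x,y]^\varepsilon-[x,y]\rangle_{T_L}$, while for $\alpha\neq 0$ Theorem~\ref{UT2^eta}~1) gives $\I^L(UT_2^\eta)=\langle x^{\eta^2}-\alpha x^\eta,\ x^\eta y^\eta,\ [x,y]^\eta-\alpha[x,y]\rangle_{T_L}$. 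The natural approach is to rescale the derivation: setting $\varepsilon':=\alpha^{-1}\eta$ converts the three $\eta$-generators into exactly the three $\varepsilon$-generators (the quadratic relation $x^{\eta^2}-\alpha x^\eta=0$ becomes $x^{(\varepsilon')^2}-x^{\varepsilon'}=0$, and the other two scale linearly), so the $L$-algebras $UT_2^\eta$ and $UT_2^\varepsilon$ are isomorphic as $L$-algebras via the Lie-algebra isomorphism $F\eta\to F\varepsilon$ sending $\eta\mapsto\alpha\varepsilon$. Hence their $T_L$-ideals coincide and the varieties are equal. As a sanity check, both codimension sequences agree: Theorem~\ref{UT2^eta}~2) gives $c_n^L(UT_2^\eta)=2^{n-1}n+1$, which is not literally $c_n^L(UT_2^\varepsilon)=2^{n-1}n-1$ from Theorem~\ref{UT2e}, so I would be careful here and rely on the $T_L$-ideal comparison rather than on matching codimensions.

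For the case $\alpha=0$ (so $\beta\neq 0$, i.e. $\eta=\beta\delta$ is a nonzero multiple of the inner derivation induced by $e_{12}$), the goal is $UT_2\in\V^L(UT_2^\eta)$, equivalently $\I^L(UT_2^\eta)\subseteq\I^L(UT_2)$. By Theorem~\ref{UT2^eta}~1) in this case $\I^L(UT_2^\eta)=\langle x^{\eta^2},\ x^\eta y^\eta,\ [x,y]^\eta,\ [x,y][z,w],\ x^\eta[y,z]\rangle_{T_L}$, so it suffices to verify that each of these five generators is a differential identity of $UT_2$ (where $L$ acts trivially). The identities $x^{\eta^2}$, $x^\eta y^\eta$, $[x,y]^\eta$ and $x^\eta[y,z]$ all contain a factor $x^\eta=0$ once $\eta$ acts as the zero derivation on $UT_2$, hence vanish, and $[x,y][z,w]\in\I^L(UT_2)$ by Theorem~\ref{ThmIdCnOrdinaryUT2}~1). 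Therefore every generator of $\I^L(UT_2^\eta)$ lies in $\I^L(UT_2)$, giving the containment of $T_L$-ideals and hence $UT_2\in\V^L(UT_2^\eta)$.

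The only genuinely delicate point is the $\alpha\neq 0$ direction: one must be sure that the rescaling $\eta\mapsto\alpha\varepsilon$ really produces an $L$-algebra isomorphism in the sense of the definition in the Preliminaries (a compatible Lie-algebra homomorphism between the two one-dimensional acting algebras), and not merely an abstract equality of generating sets. Since $\delta$ plays no role when $\alpha\neq 0$ (the relations defining $\I^L(UT_2^\eta)$ involve only $\eta$), I expect this to go through cleanly, but I would state the rescaling isomorphism explicitly to justify $\I^L(UT_2^\eta)=\I^L(UT_2^\varepsilon)$ rather than leaving it implicit.
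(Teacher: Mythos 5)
Your argument is correct and essentially identical to the paper's: for $\alpha\neq 0$ the paper likewise reads off from Theorem~\ref{UT2^eta} that $\I^{L}(UT_2^{\eta})=\I^{L}(UT_2^{\alpha\varepsilon})$ (the generators involve only $\alpha$, not $\beta$) and that rescaling $\varepsilon\mapsto\alpha\varepsilon$ does not change the variety, while for $\alpha=0$ it likewise regards $UT_2$ with trivial $F\eta$-action and checks that every generator of $\I^{L}(UT_2^{\eta})$ vanishes there. Your sanity check in fact flags a genuine typo rather than an obstruction --- the stated values $c_n^{L}(UT_2^{\varepsilon})=2^{n-1}n-1$ and $c_n^{L}(UT_2^{\eta})=2^{n-1}n+1$ cannot both hold once the varieties coincide (counting the basis in Theorem~\ref{UT2e}, and checking $n=1$, gives $2^{n-1}n+1$ in both cases) --- so falling back on the $T_L$-ideal comparison was exactly the right call.
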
	
\begin{proof}
	Let suppose first that $\alpha \neq 0$. Notice that for any  $\beta \in F$, $\I^L(UT_2^\eta)=\I^L(UT_2^{\alpha\varepsilon})$ where $UT_2^{\alpha\varepsilon}$ is the $L$-algebra $UT_2$ where $L$ acts on it as the 1-dimensional Lie algebra spanned by the derivation $\alpha\varepsilon=\alpha \ad_{e_{22}}$.
	On the other hand it is clear that $\V^L(UT_2^\varepsilon)=\V^L(UT_2^{\alpha \varepsilon})$ and then $\V^L(UT_2^\varepsilon)=\V^L(UT_2^\eta)$.
	
	Suppose now that $\alpha=0$. If $\beta=0$ there is nothing to prove, so let $\beta \neq 0$. Notice that we can regard $UT_2$ as an algebra with $F\eta$-action by derivation where $\eta$ acts trivially on $UT_2$, i.e., $x^{\eta}\equiv 0$ is differential identity of $UT_2$. Then by Theorem \ref{UT2^eta} it follows that $UT_2\in \V^L(UT_2^\eta)$.
\end{proof}

\begin{remark}\label{rmk UT_2^D}
	Let us denote by $ UT_2^D$ the $L$-algebra $UT_2$ where $L$ acts on it as the all Lie algebra $\Der(UT_2)$. In \cite{GiambrunoRizzo2018} the authors proved 
	that $UT_2^\varepsilon \in \V^L(UT_2^D)$ and, as a consequence, $\V^L(UT_2^D)$ has no almost polynomial growth. 
\end{remark}

\begin{proposition}
	\label{prop UT_n}
	If a Lie algebra $L$ acts on $UT_n$, $n \geq 2$, by derivations, then either $UT_2 \in \V^L(UT_n)$ or $UT_2^\varepsilon \in \V^L(UT_n)$.
\end{proposition}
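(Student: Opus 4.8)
The plan is to argue by induction on $n\ge 2$, using the already settled case of $UT_2$ as the base and peeling off one row (equivalently, one column) of $UT_n$ at each step. The key structural input is the cited result that every derivation of $UT_n$ is inner, which controls how $L$ can act.

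For the base case $n=2$, since every derivation of $UT_2$ is inner the image $\bar L=\varphi(L)$ of the action is a Lie subalgebra of $\Der(UT_2)=\spn_F\{\varepsilon,\delta\}$, a $2$-dimensional algebra, hence $\bar L$ is either $0$, one-dimensional, or all of $\Der(UT_2)$. If $\bar L=0$ the action is trivial and $UT_2\in\V^L(UT_2)$ by definition. If $\dim\bar L=1$, say $\bar L=F\eta$ with $\eta=\alpha\varepsilon+\beta\delta$, then Corollary \ref{Cor UT_2^eta} gives $UT_2^\varepsilon\in\V^L(UT_2^\eta)$ when $\alpha\neq0$ and $UT_2\in\V^L(UT_2^\eta)$ when $\alpha=0$. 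Finally, if $\bar L=\Der(UT_2)$ then the $L$-algebra is $UT_2^D$ and Remark \ref{rmk UT_2^D} yields $UT_2^\varepsilon\in\V^L(UT_2^D)$. Thus every possible action on $UT_2$ is covered by the results already recorded.

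For the inductive step, suppose $n>2$ and consider the last column $W=Ue_{nn}=\spn_F\{e_{1n},\dots,e_{nn}\}$ of $UT_n$. A direct computation with matrix units shows that $W$ is a two-sided ideal, and since every derivation of $UT_n$ is inner it is moreover $L$-invariant: for $\delta=\ad_a\in\bar L$ one has $e_{in}^{\ad_a}=a_{nn}e_{in}-\sum_{l}a_{li}e_{ln}\in W$, and invariance under all of $U(L)$ follows by iterating, so $W$ is an $L$-ideal. The canonical projection then realizes the quotient $UT_n/W$, as an $L$-algebra, in the form $UT_{n-1}$ equipped with an induced action by derivations. Since any $L$-quotient of $UT_n$ lies in $\V^L(UT_n)$, we obtain $\V^L(UT_{n-1})\subseteq\V^L(UT_n)$; applying the induction hypothesis to $UT_{n-1}$ produces one of $UT_2$, $UT_2^\varepsilon$ in $\V^L(UT_{n-1})$, hence in $\V^L(UT_n)$, which completes the induction.

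The point requiring care is precisely the reduction step: one must check both that $W$ is genuinely $L$-invariant (this is exactly where the innerness of the derivations of $UT_n$ is used) and that the induced $U(L)$-action on $UT_n/W$ is again an action by derivations making $UT_n/W$ an $L$-algebra isomorphic to $UT_{n-1}$, so that the inductive hypothesis legitimately applies to it. The ancillary facts that $W$ is a two-sided ideal and that $UT_n/W\cong UT_{n-1}$ are routine calculations with the matrix units $e_{ij}$, and I would relegate them to a short verification rather than belabour them.
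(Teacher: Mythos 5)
Your proof is correct, and the base case $n=2$ is handled exactly as in the paper (trivial action, one-dimensional action via Corollary~\ref{Cor UT_2^eta}, and the full $\Der(UT_2)$ via Remark~\ref{rmk UT_2^D}). The reduction from $UT_n$ to the $n=2$ case is where you diverge: you induct on $n$, quotienting by the last-column ideal $W=\spn_F\{e_{1n},\dots,e_{nn}\}$ and identifying $UT_n/W$ with $UT_{n-1}$ as an $L$-algebra, whereas the paper performs a single-step reduction, quotienting $UT_n$ by the larger ideal $I=\spn_F\{e_{ij}\ :\ i<j,\ j\neq 2\}$ and then locating inside $UT_n/I$ the $L$-invariant subalgebra $\spn_F\{e_{11}+I,e_{22}+I,e_{12}+I\}\cong UT_2$. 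Both reductions rest on the same structural input (every derivation of $UT_n$ is inner, which makes the chosen subspace an $L$-ideal), and both then invoke the $n=2$ classification. Your version has the advantage that the quotient is literally $UT_{n-1}$, so no separate verification that a subalgebra of the quotient is $L$-invariant is needed; the price is an induction in place of one quotient, plus the (routine, and correctly flagged) check that the induced $U(L)$-action on $UT_n/W$ is again an action by derivations so that the inductive hypothesis applies. The computation $e_{in}^{\ad_a}=a_{nn}e_{in}-\sum_{k\le i}a_{ki}e_{kn}\in W$ that you give is exactly what is needed for $L$-invariance of $W$. No gaps.
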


\begin{proof}
	Suppose first that $n=2$. If $L$ acts trivially there is nothing to prove, so let $L$ acts non-trivially on $UT_2$. If $L$ acts as a 1-dimensional Lie subalgebra of $\Der(UT_2)$,  then by Corollary \ref{Cor UT_2^eta}  we are done. Then let us assume that $L$ acts as a 2-dimensional Lie subalgebra of $\Der(UT_2)$. Then $L$ acts as the all Lie algebra $\Der(UT_2)$ since $\dim_F \Der (UT_2)=2$. Thus by Remark \ref{rmk UT_2^D} we are done also in this case.
	
	Suppose then that $n > 2$
	and 
consider the ideal $I=\spn_F\{e_{ij}\ : \ i<j, \ j\neq 2\}$ of $UT_n$ where $e_{ij}$'s are the usual matrix units. Since any derivation of $UT_n$ is inner (see \cite{CoelhoPolcinoMilies1993}), by the multiplication table of $UT_n$ it follows that $I$ is an $L$-ideal of $UT_n$ and $A=UT_n/I$ is an $L$-algebra. 
If $B=\spn_F\{e_{11}+I, e_{22}+I, e_{12}+I\}$, then $B$ is a $L$-subalgebra of $A$. Moreover, $B$ is isomorphic to $UT_2$ as ordinary algebras. Thus as $L$-algebra $B$ is isomorphic to $UT_2$ with $L$-action and by the first part of the proof we have that  either $UT_2 \in \V^L(B)$ or $UT_2^\varepsilon \in \V^L(B)$. Since $\V^L(B)\subseteq\V^L(A)\subseteq\V^L(UT_n)$ the proof is complete.
\end{proof}

Notice that as a consequence of the Theorem \ref{Thm:almost polynomial growth} and Proposition \ref{prop UT_n} we have that $UT_2$ and $UT_2^\varepsilon$ are the only upper triangular matrix algebras generating varieties of algebras with derivation of almost polynomial growth.

\section{Differential varieties of almost polynomial growth}
In this section 
we shall characterize the varieties of algebras with derivations of almost polynomial growth 
in case $L$ is a finite dimensional solvable Lie algebra. 

\smallskip

Let $V$ be a finite dimensional vector space over $F$. The space $\mathfrak{gl}(V)$ of all linear maps from $V$ to $V$ is a Lie algebra, if we define the Lie bracket $[-,-]$ by
$$[v, w] := v \circ w - v \circ w$$
for all $ v, w\in \mathfrak{gl}(V ),$
where $\circ$ denotes the composition of maps. Then we have the following property for solvable Lie subalgebra of $\mathfrak{gl} (V)$.

\begin{theorem}[\cite{Humphreys1972}, Theorem 4.1]
	\label{Thm:eigenvector solvable Lie algebra}
	Let $V$ be a finite dimensional non-zero vector space over an algebraically closed field $F$ of characteristic zero. Suppose that $L$ is a finite dimensional solvable Lie subalgebra of $\mathfrak{gl} (V)$. Then there is some non-zero $v\in V$ which is a simultaneous eigenvector for all $\delta \in L$, i.e., $v^\delta=\alpha_\delta v$ with $\alpha_\delta \in F$ for all $\delta \in L.$
\end{theorem}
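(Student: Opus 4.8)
The plan is to prove this classical statement (Lie's theorem) by induction on $\dim_F L$, producing at each stage a common \emph{weight space} that is stable under all of $L$. The base case $\dim_F L \le 1$ is immediate: if $\dim_F L = 0$ any nonzero vector works, and if $\dim_F L = 1$ a single endomorphism of the nonzero space $V$ over the algebraically closed field $F$ has an eigenvector. For the inductive step I would exploit solvability to find a codimension-one ideal: since $L$ is solvable and nonzero, $[L,L] \subsetneq L$, so I can choose a subspace $H$ with $[L,L] \subseteq H$ and $\dim_F(L/H) = 1$; any such $H$ is automatically an ideal (it contains the derived algebra), is solvable, and has dimension one less than $L$. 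Fix $z_0 \in L \setminus H$, so that $L = H \oplus F z_0$.

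Applying the inductive hypothesis to $H$ acting on $V$ yields a linear functional $\lambda\colon H \to F$ whose weight space $W := \{\, v \in V \mid v^\delta = \lambda(\delta)\,v \text{ for all } \delta \in H \,\}$ is nonzero. The core of the argument, and the step I expect to be the main obstacle, is to show that $W$ is invariant under all of $L$. Granting this, $z_0$ restricts to an endomorphism of the nonzero subspace $W$ and hence, since $F$ is algebraically closed, has an eigenvector $w \in W$; this $w$ is then a simultaneous eigenvector for $H$ and for $z_0$, hence for all of $L = H + F z_0$, which is exactly what we want. For the invariance, I would fix $v \in W$, $z \in L$, and $\delta \in H$ and use the commutation relation $(v^z)^\delta = (v^\delta)^z + v^{[z,\delta]}$ together with $[z,\delta] \in [L,L] \subseteq H$ to compute $(v^z)^\delta = \lambda(\delta)\,v^z + \lambda([z,\delta])\,v$; thus $v^z \in W$ provided one knows the identity $\lambda([z,\delta]) = 0$ for all $\delta \in H$ and $z \in L$.

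Establishing $\lambda([z,\delta]) = 0$ is where $\chr F = 0$ is essential, and this is the heart of the proof. The plan here is a trace computation on a cyclic subspace: fixing a nonzero $v \in W$ and $z \in L$, set $U_i = \spn_F\{\, v, v^z, \dots, v^{z^{i-1}} \,\}$ and let $m$ be least with $U_m = U_{m+1}$, so $U := U_m$ is $z$-invariant with basis $v, v^z, \dots, v^{z^{m-1}}$ and $\dim_F U = m \ge 1$. A short induction on $i$, using $v^{z^i \delta} = v^{z^{i-1}\delta z} + v^{z^{i-1}[z,\delta]}$ and $[z,\delta] \in H$, shows that every $\delta \in H$ preserves $U$ and satisfies $v^{z^i \delta} \equiv \lambda(\delta)\, v^{z^i} \pmod{U_i}$; hence $\delta$ acts on $U$ by an upper triangular matrix with constant diagonal $\lambda(\delta)$, giving $\tr(\delta|_U) = m\,\lambda(\delta)$. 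Now $[z,\delta]$ preserves $U$ as a commutator of two operators that preserve $U$, so $\tr([z,\delta]|_U) = 0$; on the other hand $[z,\delta] \in H$ gives $\tr([z,\delta]|_U) = m\,\lambda([z,\delta])$. Comparing the two yields $m\,\lambda([z,\delta]) = 0$, and since $m \ge 1$ and $\chr F = 0$ we get $\lambda([z,\delta]) = 0$, completing the invariance of $W$ and hence the theorem.
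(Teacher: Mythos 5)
Your argument is correct and is precisely the classical proof of Lie's theorem as it appears in the cited source (Humphreys, Theorem 4.1): induction via a codimension-one ideal $H\supseteq[L,L]$, the weight space $W$ for $H$, and the trace computation on the $z$-cyclic subspace to get $m\,\lambda([z,\delta])=0$, where characteristic zero is used. The paper itself gives no proof — it quotes the result — so there is nothing further to compare; your write-up fills in exactly the standard argument with no gaps.
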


Next lemmas will be useful to establish a structural result about $L$-varieties of polynomial growth.

\begin{lemma}
	\label{prop AiJAk in J^q}
Let $A=A_1\oplus A_2 + J$ be a finite dimensional algebra over an algebraically closed field $F$ of characteristic zero,  where   $A_1 \cong A_2 \cong F$,  $A_1 J A_2 \neq 0$ and $A_1 J A_2 \subseteq J^q$, with $q$ such that $J^q \neq 0$ and $J^{q+1}=0$. 
If $L$ is a finite dimensional solvable Lie algebra acting on $A$ by derivation, then either $UT_2  \in \V^L(A)$ or $UT_2^\varepsilon \in \V^L(A)$.

\end{lemma}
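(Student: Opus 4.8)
The plan is to realize one of $UT_2,UT_2^\varepsilon$ as an $L$-homomorphic image of an $L$-subalgebra of $A$, and then to read off which one occurs from the induced action on the quotient. Write $A_1=Fe_1$ and $A_2=Fe_2$ with $e_1,e_2$ orthogonal idempotents, and set $W:=A_1JA_2=e_1Je_2$. First I would record the structure of $W$. Since $A_i\cong F$, Lemma \ref{prop B_i^L} gives $e_i^\delta\in J$ for every $\delta\in L$; combined with $W\subseteq J^q$ and $J^{q+1}=0$ this yields $e_i^\delta W=We_i^\delta=0$, and for any $v=e_1ue_2\in W$ the Leibniz rule collapses, $v^\delta=e_1^\delta v e_2+e_1v^\delta e_2+e_1ve_2^\delta=e_1v^\delta e_2\in e_1Ae_2=W$. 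Hence $W$ is an $L$-invariant two-sided ideal with $WJ=JW=0$ and $W^2=0$, and for any $0\neq w_0\in W$ the subspace $\spn_F\{e_1,e_2,w_0\}$ is an ordinary subalgebra isomorphic to $UT_2$ via $e_1\mapsto e_{11}$, $e_2\mapsto e_{22}$, $w_0\mapsto e_{12}$. The only defect of this copy of $UT_2$ is that it is not $L$-invariant, because $e_i^\delta$ need not lie in it.

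Next I would use solvability. As $F$ is algebraically closed and $L$ acts on the finite dimensional space $W$ as a solvable Lie algebra, Theorem \ref{Thm:eigenvector solvable Lie algebra} produces a common eigenvector $0\neq w_0\in W$ with $w_0^\delta=\alpha_\delta w_0$ for scalars $\alpha_\delta\in F$ and all $\delta\in L$; in particular $Fw_0$ is a one dimensional $L$-ideal of $A$. Morally the eigenvalue functional $\alpha$ is what separates the two cases: $\alpha\equiv 0$ should give $UT_2$ and $\alpha\neq 0$ should give $UT_2^\varepsilon$, matching $e_{12}^\varepsilon=e_{12}$.

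The heart of the argument is to pass to the $L$-subalgebra $C$ generated by $e_1,e_2,w_0$ and to produce an $L$-ideal $I\subseteq R:=C\cap J$ with $R/I=F\overline{w_0}$ sitting in the $(1,2)$ Peirce position, so that $C/I\cong UT_2$ as ordinary algebras. Here $C=Fe_1\oplus Fe_2\oplus R$ with $R$ a nilpotent $L$-ideal, since $e_i^\delta\in J$ and $w_0^\delta\in Fw_0$. To build $I$ I would collect in it every Peirce component $e_iRe_j$ with $(i,j)\neq(1,2)$ together with an $L$-invariant hyperplane of $e_1Re_2$ complementary to $Fw_0$, the latter obtained by a second application of Lie's theorem to the quotient module $e_1Re_2/(\text{unwanted part})$. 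I expect \textbf{this collapse to be the main obstacle}: one must show that the $L$-span of the unwanted Peirce components together with the cross products $e_1Re_0\cdot e_0Re_2$ (where $e_0=1-e_1-e_2$) is a proper $L$-invariant subspace of $e_1Re_2$ missing $w_0$, so that $\overline{w_0}$ survives. This is exactly where one exploits that $C$ is generated only by $e_1,e_2,w_0$, which severely restricts the components $e_1Re_0$ and $e_0Re_2$ and hence their products into $W$, together with the eigenvector identity $w_0^\delta=\alpha_\delta w_0$. Granting it, $C/I$ is three dimensional with one dimensional radical in position $(1,2)$, hence $C/I\cong UT_2$.

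Finally, $C/I$ is an $L$-quotient of the $L$-subalgebra $C$ of $A$, so $\I^L(A)\subseteq\I^L(C)\subseteq\I^L(C/I)$ and $C/I\in\V^L(A)$; moreover $L$ acts on $C/I\cong UT_2$ by derivations, hence through some Lie subalgebra $\overline{L}\subseteq\Der(UT_2)=\spn_F\{\varepsilon,\delta\}$. If $\overline{L}=0$ then $C/I\cong UT_2$ with trivial action and $UT_2\in\V^L(A)$. If $\dim\overline{L}=1$, say $\overline{L}=F\eta$ with $\eta=\alpha\varepsilon+\beta\delta$, then $C/I\cong UT_2^\eta$ and Corollary \ref{Cor UT_2^eta} yields $UT_2^\varepsilon\in\V^L(A)$ when $\alpha\neq0$ and $UT_2\in\V^L(A)$ when $\alpha=0$. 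If $\dim\overline{L}=2$ then $\overline{L}=\Der(UT_2)$, $C/I\cong UT_2^D$, and Remark \ref{rmk UT_2^D} gives $UT_2^\varepsilon\in\V^L(A)$. In every case one of $UT_2,UT_2^\varepsilon$ lies in $\V^L(A)$, as required.
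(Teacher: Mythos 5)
Your opening reductions are sound and agree with the paper's: $e_i^\delta\in J$ via Lemma \ref{prop B_i^L}, the collapse of the Leibniz rule on $W=e_1Je_2\subseteq J^q$, and the common eigenvector $w_0$ supplied by Theorem \ref{Thm:eigenvector solvable Lie algebra}. The gap is precisely the step you flag as ``the main obstacle'': the $L$-ideal $I\subseteq R$ with $R/I=F\overline{w_0}$ and $C/I\cong UT_2$ need not exist, so the strategy of realizing $UT_2$ or $UT_2^\varepsilon$ as an $L$-quotient of an $L$-subalgebra of $A$ cannot be completed. Concretely, take $A=\spn_F\{e_{11},e_{33},e_{12},e_{23},e_{13}\}\subseteq M_3(F)$, so that $A_1=Fe_{11}$, $A_2=Fe_{33}$, $J=\spn_F\{e_{12},e_{23},e_{13}\}$, $q=2$ and $A_1JA_2=Fe_{13}=J^2$, and let $L=F\eta$ act by $\eta=\ad_{e_{12}+e_{23}}$ (one-dimensional, hence solvable, and it preserves $A$). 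Here $W=Fw_0$ with $w_0=e_{13}$ and $w_0^\eta=0$, while $e_{11}^\eta=e_{12}$ and $e_{33}^\eta=-e_{23}$ force $C=A$ and $R=J$. Since $e_{12}^\eta=e_{13}$ and $e_{23}^\eta=-e_{13}$, every $2$-dimensional $\eta$-invariant subspace of $J$ contains $e_{13}$ (an invariant plane avoiding $e_{13}$ would have to contain $(xe_{12}+ye_{23}+\ast e_{13})^\eta=(x-y)e_{13}$ for independent choices of $(x,y)$). Equivalently, the cross product $e_{12}\cdot e_{23}=e_{13}=w_0$ shows that any ideal containing the unwanted Peirce components $e_1Re_0=Fe_{12}$ and $e_0Re_2=Fe_{23}$ already contains $w_0$. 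Hence every candidate $I$ kills $\overline{w_0}$, no $3$-dimensional $L$-quotient of $C$ is isomorphic to $UT_2$, and your final step of reading off $\overline{L}\subseteq\Der(UT_2)$ never gets off the ground.

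The paper avoids this by never asking for an $L$-invariant copy or quotient of $UT_2$. It keeps the non-$L$-invariant ordinary subalgebra $D=\spn_F\{e_1,e_2,b\}\cong UT_2$ and proves the inclusion $P_n^L\cap\I^L(C)\subseteq P_n^L\cap\I^L(UT_2)$ (resp. $\subseteq P_n^L\cap\I^L(UT_2^\varepsilon)$ when some $b^h=b$) directly: a multilinear $f\in\I^L(C)$ is written modulo the explicit bases of Theorems \ref{ThmIdCnOrdinaryUT2} and \ref{UT2e}, and each coefficient is killed by an evaluation in $\{e_1,e_2,b\}$, using only that $e_i^h\in J$ for $h\in U'(L)$, that $bJ=Jb=0$, and that $D$ has the multiplication table of $UT_2$. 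In the example above this still yields $UT_2\in\V^L(A)$, even though $UT_2$ is not an $L$-subquotient of $A$ itself; membership in the variety is witnessed by the $T_L$-ideal containment (equivalently, by subquotients of powers of $A$), which is exactly what the identity-theoretic computation captures and your structural construction cannot. So the proposal is incomplete at its central step, and that step is not merely technical: the object it requires does not exist in general.
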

\begin{proof}
	Since $A_1 J A_2\neq 0$, if $e_1$ and $e_2$ denote the unit elements of $A_1$ and $A_2$, respectively, we have that $e_1 Je_2\neq 0$ with $e_1 e_2=e_2 e_1=0$. Moreover, since $A_1JA_2 \subseteq J^q$, there exists $j\in J^q$ such that $e_1je_2\neq 0$.
	Let $B=\spn_F \{e_1 j^d e_2 \ | \ d \in \mathcal{B}_{U(L)}\}$. Clearly $B$ is a subalgebra of $A$. Moreover, by the  Poincaré--Birkhoff--Witt Theorem and the Leibniz rule we have that, for all $h\in \mathcal{B}_{U(L)}$,
	$$(e_1 j^d e_2)^h=e_1^h j^d e_2 + e_1 j^{dh} e_2 + e_1 j^d e_2^h + \sum e_1^{h'} j^{dh''} e_2^{h'''}$$
	where $h', h'', h'''$ are suitable elements of $U(L)$ such that at least two between them are not in $\spn_F \{1_{U(L)}\}$. Since $A_1 \cong A_2 \cong F$, by Lemma \ref{prop B_i^L} it follows that $e_1^h, e_2^h\in J$ for all $h\in U(L)$. Then since $j\in J^q$ , we have that $(e_1 j^d e_2)^h=e_1 j^{dh}e_2$ for all $h\in U(L)$. Thus it follows that $B$ is an $L$-subalgebra of $A$. Now, since $L$ is solvable, by Theorem \ref{Thm:eigenvector solvable Lie algebra} there exists a non-zero $b\in B$ which is a simultaneous eigenvector
	for all $\delta \in L$. Thus, as a consequence of the Poincaré--Birkhoff--Witt Theorem, we have that $b^h=\alpha_h b$, $\alpha_h\in F$, for all $h\in U(L)$. Notice also that $b=e_1 b e_2$ and $\bar{j}b=b\bar{j}=0$ for all $\bar{j}\in J$.
	Let now $C$ be the subalgebra of $A$ generating by $\{e_1^h, e_2^h,b \ | \ h\in \mathcal{B}_{U(L)}\}$. Clearly $C$ is $L$-invariant and its vector subspace $D=\spn_F\{e_1, e_2, b\}$ is a subalgebra (not necessarily $L$-invariant) such that  $b^h=\alpha_h b$, $\alpha_h\in F$, for all $h\in U(L)$ and $D\cong UT_2$ as ordinary algebras. We shall prove that either $UT_2\in \V^L(C)$ or $UT_2^\varepsilon\in \V^L(C)$ and since $\V^L(C)\subseteq\V^L(A)$ the theorem will be proved.
	
	Suppose first that $b^h=0$ for all $h\in\mathcal{B}_{U'(L)}$, where $\mathcal{B}_{U'(L)}$ is a basis of the non-unital enveloping algebra $U'(L)$ of $L$, and let $f\in \I^L(C)$ be a multilinear $L$-polynomial of degree $n$. By Theorem \ref{ThmIdCnOrdinaryUT2}, $f$ can be written as
	\[ f=\alpha x_1 \dots x_n + \sum_{\mathcal{I}, \mathcal{J}} \alpha_{\mathcal{I}, \mathcal{J}} x_{i_1}\dots x_{i_m} [x_k, x_{j_1}, \dots , x_{j_{n-m-1}}]+ g, \]
	where $g\in \I^L(UT_2)$, $\mathcal{I}=\{i_1, \dots, i_m\}$ and $\mathcal{J}= \{j_1, \dots, j_{n-m-1}\}$ with $i_1<\dots< i_m$, $k>j_1< \dots< j_{n-m-1}$ and $0\leq m \leq n-2$. First of all notice that if $\varphi$ is an evaluation that send at least one variables in $b$ and all the others in $\{e_1, e_2\}$, then $\varphi(g)=0$: indeed, since $b^h=0$,  $e_1^h, e_2^h\in J$ for all $h\in \mathcal{B}_{U'(L)}$ and $\bar{j}b=b\bar{j}=0$ for all $\bar{j}\in J$, all the monomials of $g$ in which appear at least an element of $U'(L)$ as exponent of some variables are evaluated in zero. So it is not restrictive to assume that $g$ is a linear combination of monomials in which all the variables have as exponent $1_{U(L)}$, i.e., $g\in P_n\cap\I^L(UT_2)$. Thus since $\spn_F\{e_1, e_2, b\}\cong UT_2$ as ordinary algebras, it follows that $\varphi(g)=0$.
	
	For fixed $\mathcal{I}$ and $\mathcal{J}$ the evaluation $x_{i_1}=\dots =x_{i_m}=e_1+e_2$, $x_k=b$, $x_{j_1}=\dots = x_{j_{n-m-1}}=e_{2}$ gives $\alpha_{\mathcal{I}, \mathcal{J}}=0$ since  $e_1^h, e_2^h\in J$ for all $h\in \mathcal{B}_{U'(L)}$, $\bar{j}b=b\bar{j}=0$ for all $\bar{j}\in J$ and $\spn_F\{e_1, e_2, b\}$ has the same multiplication table of $UT_2$. Moreover, by choosing $x_1=\dots=x_{n-1}=e_{1}$ and $x_n=b$ we get $\alpha=0$. Thus $f=g\in \I^L(UT_2)$ and so $UT_2\in\V^L(C)$.
	
	Let now suppose that there exists $h\in\mathcal{B}_{U'(L)}$ such that $b^h=\alpha_h b$ with $\alpha_h\in F$, $\alpha_h\neq 0$. Clearly it is not restrictive suppose that $h\in L$. Moreover, we may assume that $b^h=b$ and $b^{h'}=0$ for all $h'\in \mathcal{B}_{U'(L)}$ such that $h'\neq h$. In fact, if not, we consider a new basis of $U'(L)$ obtained from our basis $\mathcal{B}_{U'(L)}$ by substituting any element $h'\in \mathcal{B}_{U'(L)}$, $h'\neq h$, such that $b^{h'}=\alpha_{h'}b$, $\alpha_{h'}\neq 0$, with the element $h'-\alpha_{h'}\alpha_h^{-1}h$ and $h$ with $\alpha_h^{-1}h$. 
	
	Let $f\in \I^L(C)$ be a multilinear $L$-polynomial of degree $n$. By Theorem \ref{UT2e}, $f$ can be written as
	\begin{align*}
		f=&\alpha x_1 \dots x_n + \sum_{\mathcal{I}, \mathcal{J}} \alpha_{\mathcal{I}, \mathcal{J}} \ x_{i_1}\dots x_{i_m} [x_k, x_{j_1}, \dots , x_{j_{n-m-1}}]+ \sum_{r=1}^{n} \beta_{r} \ x_{q_1}\dots x_{q_{n-1}}x_r^{h}\\
		&+ \sum_{\mathcal{P}}\gamma_{\mathcal{P}} \ x_{p_1}\dots x_{p_s}[x_{t_1}^h, x_{t_2}, \dots, x_{t_{n-s}}] + g
	\end{align*}
	where $g\in\I^L(UT_2^\varepsilon)$, $\mathcal{I}=\{i_1, \dots, i_m\}$, $\mathcal{J}= \{j_1, \dots, j_{n-m-1}\}$ and $\mathcal{P}=\{p_1, \dots, p_s\}$ with $i_1<\dots< i_m$, $k>j_1< \dots< j_{n-m-1}$, $p_1<\dots<p_s$, $t_1<\dots<t_{n-s}$, $0\leq m, s \leq n-2$ and $q_1<\dots<q_{n-1}$. 
	
	Notice that $\varphi(g)=0$ for all evaluation $\varphi$ that send at least one variables in $b$ and all the others in $\{e_1, e_2\}$. In fact, if $\varphi$ is such evaluation, then every monomials in $g$ with at least two elements of $U'(L)$ as exponent of different variables and that one with an element $h'\in U'(L)$, $h'\neq h$, as exponent of some variables are evaluated in zero. So, we may assume that $g$ is a linear combination of monomials in which at most one varialbe has as exponent $h$ and all the others $1_{U(L)}$. Thus since $g\in\I^L(UT_2^\varepsilon)$, $\spn_F\{e_1, e_2, b\}\cong UT_2$ as ordinary algebras and $b^h=b$, it follows that $\varphi(g)=0$.
	
	Suppose first $n=1$. Then $f=f(x)=\alpha x + \beta x^h+ g(x)$ where $g(x)\in\I^L(UT_2^\varepsilon)$. If we evaluate $x=e_1$, we obtain $\alpha e_1+ \beta e_1^h +\bar{g}(e_1)=0$, where $\beta e_1^h +\bar{g}(e_1)\in J$ since $e_1^d\in J$ for all $d\in U'(L)$. Thus since $A_1 \cap J=0$, it follows that $\alpha=0$. Now by making the substitution $x=b$, we get $\beta=0$ because $b^{h'}=0$ for all $h'\in \mathcal{B}_{U'(L)}$, $h'\neq h$. Thus for $n=1$, $f=g\in \I^L(UT_2^\varepsilon)$.
	
	Now if $n=2$, then $f=f(x,y)=\alpha_1 x y+ \alpha_2 yx+ \beta_1 y x^h +\beta_2 x y^h + \gamma x^h y+g(x,y)$ where $g(x,y)\in\I^L(UT_2^\varepsilon)$. By making the evaluation $x=y=e_1$ we get $(\alpha_1+\alpha_2)e_1+ (\beta_1+\beta_2)e_1 e_1^h + \gamma e_1^h e_1+ \bar{g}(e_1,e_1)=0$ where $ (\beta_1+\beta_2)e_1 e_1^h + \gamma e_1^h e_1+ \bar{g}(e_1,e_1)\in J$ since  $e_1^d\in J$ for all $d\in U'(L)$. Thus it follows that $\alpha_1+\alpha_2=0$ since $A_1\cap J=0$. Now by choosing $x=e_2$ and $y=b$ we have that $\alpha_2=0$ and $\alpha_1=-\alpha_2=0$. Now by evaluating $x=b$ and $y=e_1$ we get $\beta_1=0$. From the substitution $x=e_1$ and $y=b$ we obtain $\beta_2=0$. Finally, the evaluation $x=b$ and $y=e_2$ gives $\gamma=0$. Thus also in this case we proved that $f=g\in \I^L(UT_2^\varepsilon)$.
	
	Let now $n\geq 3$. For fixed $\mathcal{I}$ and $\mathcal{J}$ consider the evaluation $x_{i_1}=\dots =x_{i_m}=e_1+e_2$, $x_k=b$ and $x_{j_1}=\dots=x_{j_{n-m-1}}=e_2$. Since $|\mathcal{J}|\geq 2$ and $k>j_{1}$, then $\alpha_{\mathcal{I}, \mathcal{J}}=0$ because $e_1^h, e_2^h\in J$, $\bar{j}b=b\bar{j}=0$ for all $\bar{j}\in J$ and $\spn_F\{e_1, e_2, b\}$ has the same multiplication table of $UT_2$. Moreover, for fixed $\mathcal{P}$ by making the evaluation $x_{p_1}=\dots=x_{p_s}=e_1+e_2$, $x_{t_1}=b$ and $x_{t_2}=\dots= x_{t_{n-s}}=e_2$, we get $\gamma_{\mathcal{P}}=0$ if $\mathcal{P}\neq \{1, \dots, s\}$. Thus we may assume that
	\[ f=\alpha x_1 \dots x_n + \sum_{r=1}^{n} \beta_{r}\ x_{q_1}\dots x_{q_{n-1}}x_r^{h}+ \sum_{s=0}^{n-2}\gamma_{s} \ x_{1}\dots x_{s}[x_{s+1}^h, x_{s+2}, \dots, x_{t_{n}}] + g, \]
	where $g\in\I^L(UT_2^\varepsilon)$ and $q_1<\dots<q_{n-1}$. If $n$ is odd, then from the evaluation $x_1=b$, $x_2=\dots=x_n=e_2$ we get $\alpha=0$. Also the evaluation $x_1=e_1$, $x_2=b$, $x_3=\dots=x_n=e_2$ gives $\gamma_1=0$ since $n\geq 3$. On the other hand, if $n$ is even, then by making the evaluation $x_1=e_1$, $x_2=b$, $x_3=\dots=x_n=e_2$, we obtain $\alpha=0$ since $n\geq 4$. Moreover, by choosing $x_1=b$, $x_2= \dots =x_n=e_2$ we get $\gamma_0=0$.  Thus we may assume that $\alpha=\gamma_0=\gamma_1=0$ for all $n\geq 3$. Now if for all $2\leq s\le n-2$ we consider the evaluation $x_1=\dots=x_s=e_1+e_2$, $x_{s+1}=b$, $x_{s+2}=\dots=x_n=e_2$, then we get $\gamma_s=0$.  Finally, for $1\leq r \leq n$ by choosing $x_r=b$ and $x_{q_1}=\dots=x_{q_{n-1}}=e_1$ we obtain $\beta_r=0$. As a consequence we have that for all $n\geq 3$ $f=g\in \I^L (UT_2^\varepsilon)$. Thus we prove that  $UT_2^\varepsilon\in \V^L(C)$ and the proof is complete.
\end{proof}

A basic result we shall need in what follows is the Lie's theorem for solvable Lie algebras.

\begin{theorem}[\cite{Humphreys1972}, Lie's Theorem]
	\label{Lie's Thm}
	Let $V$ be a finite dimensional non-zero vector space over an algebraically closed field $F$ of characteristic zero and let $L$ be a finite dimensional solvable Lie subalgebra of $\mathfrak{gl} (V)$. Then there is a basis of $V$ in which every element of $L$ is represented by an upper triangular matrix.
\end{theorem}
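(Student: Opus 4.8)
The plan is to deduce Lie's Theorem from the common eigenvector statement already in hand, namely Theorem \ref{Thm:eigenvector solvable Lie algebra}, by an induction on $n = \dim_F V$ that constructs a complete $L$-invariant flag. The starting observation is that finding a basis in which every $\delta \in L$ is upper triangular is equivalent to producing a chain of subspaces $0 = V_0 \subset V_1 \subset \cdots \subset V_n = V$ with $\dim_F V_i = i$ and $V_i^\delta \subseteq V_i$ for all $\delta \in L$; any basis $v_1, \dots, v_n$ adapted to such a flag, so that $v_1, \dots, v_i$ spans $V_i$, then satisfies $v_i^\delta \in V_i = \spn_F\{v_1, \dots, v_i\}$, which is exactly upper triangularity of the matrix of $\delta$.

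The base case $n = 1$ is immediate, since every element of $\mathfrak{gl}(V)$ is then a scalar and hence trivially upper triangular. For the inductive step, suppose $n > 1$ and that the result holds in all smaller dimensions. First I would apply Theorem \ref{Thm:eigenvector solvable Lie algebra} to obtain a non-zero $v \in V$ that is a simultaneous eigenvector for every $\delta \in L$; then $V_1 := \spn_F\{v\}$ is a one-dimensional $L$-invariant subspace. Next I would pass to the quotient $W := V/V_1$, which is a non-zero space of dimension $n - 1$ carrying the induced action of $L$; the image $\bar{L}$ of $L$ inside $\mathfrak{gl}(W)$ is a homomorphic image of $L$, hence again a finite dimensional solvable Lie algebra, and $\bar{L} \subseteq \mathfrak{gl}(W)$, so the hypotheses of the statement persist on $W$.

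By the induction hypothesis applied to $\bar{L}$ acting on $W$, there is a complete $\bar{L}$-invariant flag $0 = W_0 \subset W_1 \subset \cdots \subset W_{n-1} = W$. Pulling it back along the canonical projection $\pi : V \to W$ and prepending $V_1$ yields the chain $0 \subset V_1 \subset \pi^{-1}(W_1) \subset \cdots \subset \pi^{-1}(W_{n-1}) = V$, which is a complete flag in $V$ because each step raises the dimension by exactly one. Every member is $L$-invariant: $V_1$ is invariant since $v$ is a common eigenvector, while $\pi^{-1}(W_i)^\delta \subseteq \pi^{-1}(W_i)$ follows from $\pi(x^\delta) = \pi(x)^{\bar\delta} \in W_i$ whenever $\pi(x) \in W_i$, using that $W_i$ is $\bar{L}$-invariant. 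Choosing a basis adapted to this flag produces the desired upper triangular form for all of $L$ simultaneously.

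The entire content of the argument sits in the common eigenvector statement of Theorem \ref{Thm:eigenvector solvable Lie algebra}, which we may assume; the surrounding flag induction is routine. The only points needing a little care are that solvability is inherited by the homomorphic image $\bar{L}$, so that the induction hypothesis genuinely applies, and that the quotient $W$ is non-zero, which is guaranteed precisely by the assumption $n > 1$ governing the inductive step.
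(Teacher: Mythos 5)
Your argument is correct: the induction on $\dim_F V$ that uses Theorem \ref{Thm:eigenvector solvable Lie algebra} to peel off a one-dimensional invariant subspace, passes to the quotient (noting that the image of $L$ in $\mathfrak{gl}(V/V_1)$ is again solvable), and pulls back the resulting flag is exactly the standard derivation of Lie's Theorem given in the cited source \cite{Humphreys1972}. The paper offers no proof of its own beyond the citation, so your write-up simply supplies, correctly, the argument the reference contains.
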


Recall that if $V$ is a $p$-dimensional vector space over $F$ and we fix a basis of it, then we may identify $\mathfrak{gl}(V)$ with the set of all $p \times p$ matrices over $F$, and we write $\mathfrak{gl}_p$ for the Lie algebra of all $p \times p$ matrices over $F$ with the Lie bracket defined by $[a,b]=ab-ba$, for all $a,b\in \mathfrak{gl}_p$.

\begin{theorem}
	\label{Thm: Characterization in terms of $UT_2^epsilon$}
	Let $L$ be a finite dimensional solvable Lie algebra over a field $F$ of characteristic zero and $A$ be a finite dimensional
	$L$-algebra over $F$. Then the sequence $c_n^L(A)$, $n\geq 1$, is polynomially bounded if and only if $UT_2, UT_2^\varepsilon \notin \V^L(A)$.
\end{theorem}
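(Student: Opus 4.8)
The plan is to prove the two directions separately. One direction is immediate: if either $UT_2$ or $UT_2^\varepsilon$ lies in $\V^L(A)$, then by Theorem \ref{Thm:almost polynomial growth} that variety has almost polynomial growth, so $\V^L(A)$ contains a subvariety of exponential growth and $c_n^L(A)$ cannot be polynomially bounded. Hence $UT_2, UT_2^\varepsilon \notin \V^L(A)$ is necessary. The substance of the theorem is the converse: assuming $UT_2, UT_2^\varepsilon \notin \V^L(A)$, I must show $c_n^L(A)$ is polynomially bounded. Since $c_n^L(A)$ either grows exponentially or is polynomially bounded (Gordienko's dichotomy via Theorem \ref{L-exp}), and $\exp^L(A)=\exp(A)$ by Theorem \ref{Theorem on Gordienko-kochetov's conjecture}, it suffices to show that $\exp(A)\leq 1$ is forced, and then upgrade the conclusion to polynomial boundedness. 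More precisely I plan to argue contrapositively: if $c_n^L(A)$ is \emph{not} polynomially bounded, then $\exp^L(A)=\exp(A)\geq 2$, and I will produce a copy of $UT_2$ or $UT_2^\varepsilon$ inside $\V^L(A)$.

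The main work is to extract one of the two generating algebras from the hypothesis $\exp(A)\geq 2$. By Theorem \ref{exp} applied to the Wedderburn--Malcev decomposition $A=B_1\oplus\cdots\oplus B_k+J$, the condition $\exp(A)\geq 2$ gives simple blocks $B_{i_1},\dots,B_{i_r}$ with $B_{i_1}JB_{i_2}J\cdots JB_{i_r}\neq 0$ and $\dim_F(B_{i_1}\oplus\cdots\oplus B_{i_r})\geq 2$. The cleanest reduction is to the situation where two one-dimensional blocks are linked through the radical, which is exactly the setup of Lemma \ref{prop AiJAk in J^q}: I would first pass to an algebraically closed field (identities and codimensions are preserved under scalar extension), then use Theorem \ref{exp} together with the multiplicative structure to locate $A_1\cong A_2\cong F$ (corresponding to diagonal idempotents) with $A_1JA_2\neq0$, forming an $L$-invariant subquotient as in that lemma. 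Lemma \ref{prop AiJAk in J^q}, whose proof crucially uses Lie's Theorem (Theorem \ref{Lie's Thm}) and the eigenvector result (Theorem \ref{Thm:eigenvector solvable Lie algebra}) for the solvable $L$, then yields $UT_2\in\V^L(A)$ or $UT_2^\varepsilon\in\V^L(A)$, contradicting the hypothesis. The solvability of $L$ enters precisely here, through the existence of a common eigenvector that makes the radical element $b$ transform as $b^h=\alpha_h b$.

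The hard part is the bookkeeping needed to guarantee that the pair of idempotents one selects actually generates (together with a radical element and the $L$-action) an $L$-subalgebra of $A$ of the required form, rather than something larger or not $L$-invariant. Blocks $B_i$ of dimension greater than one, and the possibility $B_i^L\subseteq J$ when $B_i\cong F$ (Lemma \ref{prop B_i^L}), must be handled so that one genuinely lands in the hypotheses of Lemma \ref{prop AiJAk in J^q}; in particular one may need to pick $q$ with $J^q\neq0$, $J^{q+1}=0$ and arrange $A_1JA_2\subseteq J^q$ by descending along the radical filtration. Once $\exp(A)\leq 1$ is established, polynomial boundedness of $c_n^L(A)$ follows from the dichotomy: a non-polynomially-bounded sequence would force $\exp^L(A)\geq2$, hence $\exp(A)\geq2$ by Theorem \ref{Theorem on Gordienko-kochetov's conjecture}, a contradiction. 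Assembling these pieces completes the equivalence.
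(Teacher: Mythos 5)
Your overall architecture (the easy direction via Theorem \ref{Thm:almost polynomial growth}, the dichotomy plus Theorem \ref{Theorem on Gordienko-kochetov's conjecture} to reduce the converse to showing $\exp(A)\leq 1$, scalar extension to an algebraically closed field, and the use of Lemma \ref{prop B_i^L} and Lemma \ref{prop AiJAk in J^q} with a descent along the radical filtration) matches the paper's proof. But there is a genuine gap in your case analysis. From $\exp(A)\geq 2$ and Theorem \ref{exp} you get blocks with $B_{i_1}JB_{i_2}\cdots JB_{i_r}\neq 0$ and $\dim_F(B_{i_1}\oplus\cdots\oplus B_{i_r})\geq 2$, and you propose to reduce always to ``two one-dimensional blocks linked through the radical.'' That reduction is impossible when the maximum is attained with $r=1$ and $\dim B_{i_1}>1$, i.e.\ when some simple component of $A/J$ is $M_{n}(F)$ with $n\geq 2$: here there is no radical linking at all (e.g.\ $A=M_2(F)$ has $J=0$ and $\exp(A)=4$), the diagonal idempotents $e_{11},e_{22}$ satisfy $e_{11}Ae_{22}\neq 0$ only through $e_{12}\notin J$, and the spans $Fe_{11},Fe_{22}$ are neither Wedderburn blocks nor $L$-invariant, so Lemma \ref{prop AiJAk in J^q} does not apply. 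This case needs a separate argument, which is where solvability enters a second time: all derivations of $M_n(F)$ are inner, so $L$ acts as $\operatorname{ad}\overline{L}$ for a solvable $\overline{L}\subseteq\mathfrak{gl}_n$, Lie's Theorem (Theorem \ref{Lie's Thm}) puts $\overline{L}$ in upper triangular form, hence $UT_n$ is an $L$-invariant subalgebra of $M_n(F)$ and lies in $\V^L(A)$, and Proposition \ref{prop UT_n} then yields $UT_2$ or $UT_2^\varepsilon$ in $\V^L(A)$. Your proposal never invokes Proposition \ref{prop UT_n}, and you attribute Lie's Theorem to the proof of Lemma \ref{prop AiJAk in J^q}, which actually uses only the common-eigenvector result (Theorem \ref{Thm:eigenvector solvable Lie algebra}); this misattribution is a symptom of the missing case.

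A second, smaller point: once all simple components are one-dimensional and two of them are linked by $J$, you still must verify that $A_i\oplus A_k+J$ is an $L$-subalgebra (this is exactly the second assertion of Lemma \ref{prop B_i^L}, that $A_i^L\subseteq J$ when $A_i\cong F$) before the induction on the nilpotency index of $J$ can hand you the hypotheses of Lemma \ref{prop AiJAk in J^q}. You flag this as ``bookkeeping'' but do not carry it out; it is routine given Lemma \ref{prop B_i^L}, so the only substantive defect is the untreated matrix-block case above.
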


\begin{proof}
	First suppose that $c_n^L(A)$ is polynomially bounded. Since, by theorems \ref{ThmIdCnOrdinaryUT2} and \ref{UT2e}, $UT_2$ and $UT_2^\varepsilon$ generate  $L$-varieties of exponential growth, we have $UT_2, UT_2^\varepsilon \notin \V^L(A)$.
	
	Now assume $UT_2, UT_2^\varepsilon \notin \V^L(A)$. Using an argument
	analogous to that used in the ordinary case (see \cite[Theorem
	4.1.9]{GiambrunoZaicev2005book}), we can prove that the differential
	codimensions do not change upon extension of the base field and so
	we may assume $F$ is algebraically closed. 
	Since the Jacobson radical $J=J(A)$ of $A$ is an $L$-ideal, by Proposition \ref{Prop semisimple}
	 we have that $\overline{A}= A/J$ is a semisimple $L$-algebra such that
		$\overline{A}=\overline{A}_1\oplus \cdots \oplus \overline{A}_m$
		where  $\overline{A}_i$ is an $L$-simple algebras such that $\overline{A}_i\cong_L M_{n_i}(F),$ $n_i\geq 1$, for all $1\leq i \leq m$. 
	
	Suppose that $n_i>1$, for some $i$. As a consequence of the Noether-Skolem theorem, all derivations of $M_{n_i}(F)$ are inner (\cite[p.100]{Herstein1968}), then $L$ acts on $M_{n_i}(F)$ as $\ad \overline{L}$ where $\overline{L}$ is a Lie subalgebra of $\mathfrak{gl}_{n_i}$. Since every homomorphic image of a solvable Lie algebra is still solvable, then  $\overline{L}$ is solvable. Thus, by Theorem \ref{Lie's Thm}, $\overline{L}$ is contained in the Lie subalgebra $\mathfrak{U}_{n_i}$ of $\mathfrak{gl}_{n_i}$ of $n_i\times n_i$ upper triangular matrices over $F$. Hence it follows that $UT_{n_i}$ is an $L$-invariant subalgebra of $M_{n_i}(F)$, i.e., $UT_{n_i}\in \V^L(M_{n_i}(F))\subseteq \V^L(A)$ and by Proposition \ref{prop UT_n} we reach a contradiction.	Thus $\overline{A}_i\cong F$ for all $1\leq i \leq  m$.
	
	Notice that if we consider the Wedderburn-Malcev decomposition of $A$ as ordinary algebra,
	$A=A_1\oplus\dots\oplus A_m+ J,$ 
	then   
	by Remark \ref{rmk isomorphism B_i and A_i}
	$A_i\cong F$ (as ordinary algebras) for all $1\leq i \leq m$. Thus in order to finish the proof, by theorems \ref{Theorem on Gordienko-kochetov's conjecture} and \ref{Characterization of exp}, it is enough to guarantee that $A_i JA_k=0$  for all $1 \leq i,k\leq m$, $i\neq k$.  Suppose to the contrary that there exist $1\leq i,k\leq  m$, $i \neq k$, such that $A_i JA_k\neq 0$. Since $A_i\cong A_k\cong F$, by Lemma \ref{prop B_i^L} $A_i^L\subseteq J$ and $A_k^L\subseteq J$; then $B=A_i \oplus A_k +J$ is a $L$-subalgebra of $A$. We claim that either $UT_2\in \V^L(B)$ or $UT_2^\varepsilon\in \V^L(B)$. Let $q$ the largest integer such that $J^q\neq 0$ and $J^{q+1}=0$. We shall prove the claim by induction on $q$. If $q=1$, then since $A_i JA_k\subseteq J$, by Lemma \ref{prop AiJAk in J^q} we are done. So, let us assume that $q>1$. If $A_i JA_k\subseteq J^q$, then by Lemma \ref{prop AiJAk in J^q} we are done also in this case. So, let us suppose that $A_i J A_k\nsubseteq J^q$. Then since $J^q$ is an $L$-ideal of $B$, $\overline{B}=B/J^q$ is an $L$-algebra such that $\overline{B}= \overline{A}_i \oplus  \overline{A}_k +\overline{J}$ where $\overline{A}_i \cong  \overline{A}_k\cong F$, $\overline{J}^{q}=0$ and $\overline{A}_i \overline{J} \ \overline{A}_k\neq 0$. Then by the inductive hypothesis we have that  either $UT_2\in \V^L(\overline{B})$ or $UT_2^\varepsilon\in \V^L(\overline{B})$ and since $\V^L(\overline{B}) \subseteq \V^L (B)$ the claim is proved.
	 Since $B$ is a $L$-subalgebra of $A$, we have proved that either $UT_2\in\V^L(A)$ or $UT_2^\varepsilon\in\V^L(A)$, a contradiction, and the theorem is proved.		
\end{proof}

As a consequence we have the following.

\begin{corollary}
	If $L$ is a finite dimensional solvable Lie algebra, then the algebras $UT_{2}$ and $UT_{2}^{\varepsilon}$ are the only finite dimensional $L$-algebras generating varieties of almost polynomial growth.
\end{corollary}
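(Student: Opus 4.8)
The plan is to deduce this directly from Theorem \ref{Thm: Characterization in terms of $UT_2^epsilon$} together with the polynomial-versus-exponential dichotomy for differential codimensions recorded in Section 2. First I would note that $UT_2$ and $UT_2^\varepsilon$ do generate $L$-varieties of almost polynomial growth: this is precisely the content of Theorem \ref{Thm:almost polynomial growth}, and by Theorems \ref{ThmIdCnOrdinaryUT2} and \ref{UT2e} the sequences $c_n^L(UT_2)=2^{n-1}(n-2)+2$ and $c_n^L(UT_2^\varepsilon)=2^{n-1}n-1$ indeed grow exponentially. So both algebras furnish genuine examples, and the whole force of the corollary lies in the word \emph{only}, i.e.\ in showing that, up to $T_L$-equivalence, no further finite dimensional $L$-algebra has this property.

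Next I would take an arbitrary finite dimensional $L$-algebra $A$ whose variety $\V^L(A)$ has almost polynomial growth. By definition this means that $c_n^L(A)$ is not polynomially bounded while every \emph{proper} $L$-subvariety of $\V^L(A)$ grows polynomially. Invoking Theorem \ref{Thm: Characterization in terms of $UT_2^epsilon$} — whose hypotheses ($L$ finite dimensional solvable, $A$ finite dimensional) are exactly those of the corollary — the failure of polynomial boundedness of $c_n^L(A)$ forces $UT_2\in\V^L(A)$ or $UT_2^\varepsilon\in\V^L(A)$.

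In either case, the corresponding variety $\V^L(UT_2)$ (respectively $\V^L(UT_2^\varepsilon)$) is an $L$-subvariety of $\V^L(A)$ whose codimension sequence grows exponentially. Since $\V^L(A)$ has almost polynomial growth, any proper $L$-subvariety would have to grow polynomially; hence this exponentially growing subvariety cannot be proper, and therefore $\V^L(UT_2)=\V^L(A)$ (respectively $\V^L(UT_2^\varepsilon)=\V^L(A)$). This says exactly that $A$ is $T_L$-equivalent to $UT_2$ or to $UT_2^\varepsilon$, establishing the claim.

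I expect no substantive obstacle here, because all the structural analysis has already been carried out in Theorem \ref{Thm: Characterization in terms of $UT_2^epsilon$}; the one point that needs care is the logical step of the last paragraph, where one must use the dichotomy between polynomial and exponential growth (a consequence of Gordienko's results in \cite{Gordienko2013JA}) to be sure that ``not polynomially bounded'' produces a concrete exponentially growing subvariety which, by the defining property of almost polynomial growth, is forced to coincide with the whole of $\V^L(A)$.
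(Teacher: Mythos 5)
Your proposal is correct and follows exactly the route the paper intends: the corollary is stated as an immediate consequence of Theorem \ref{Thm: Characterization in terms of $UT_2^epsilon$}, and your argument (non-polynomial growth forces $UT_2$ or $UT_2^\varepsilon$ into $\V^L(A)$, and almost polynomial growth then forces the corresponding exponentially growing subvariety to be all of $\V^L(A)$) is precisely that deduction. The only minor remark is that the last step needs only the explicit exponential codimension formulas of Theorems \ref{ThmIdCnOrdinaryUT2} and \ref{UT2e}, not the full polynomial-versus-exponential dichotomy, but this does not affect correctness.
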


\end{document}